\newtheorem{thm}{Theorem}[section]
\newtheorem{cor}[thm]{Corollary}
\newtheorem{lem}[thm]{Lemma}
\newtheorem{prop}[thm]{Proposition}
\theoremstyle{definition}
\newtheorem{rmk}[thm]{Remark}
\theoremstyle{definition}
\newtheorem{ex}[thm]{Example}
\theoremstyle{remark}
\newtheoremstyle{named}{}{}{\itshape}{}{\bfseries}{.}{.5em}{\thmnote{#3's }#1}
\theoremstyle{named}
\newcommand{\R}{{\mathbb R}}
\newcommand{\C}{{\mathbb C}}
\newcommand{\Z}{{\mathbb Z}}
\newcommand{\T}{{\mathcal T}}
\newcommand{\Sg}{{{\mathcal S}}}
\newcommand{\X}{{\rm X}}
\newcommand{\Rep}{{\rm Rep}}
\newcommand{\SL}{{\rm SL}}
\newcommand{\GL}{{\rm GL}}
\newcommand{\U}{{\rm U}}
\newcommand{\SU}{{\rm SU}}
\newcommand{\SO}{{\rm SO}}
\newcommand{\Q}{{\mathbb Q}}
\newcommand{\F}{{\mathbb F}}
\newcommand{\B}{{\mathcal B}}
\newcommand{\rk}{{\rm rank\,}}
\newcommand{\Hom}{{\rm Hom}}
\newcommand{\Comm}{{\rm Comm}}
\newcommand{\leqs}{\leqslant}
\newcommand{\maps}{\longrightarrow}
\newenvironment{psmallmatrix}
  {\left(\begin{smallmatrix}}
  {\end{smallmatrix}\right)}
\def\ds{\displaystyle}
\title{Poincar\'e series of character varieties for nilpotent groups}
\author{Mentor Stafa}
\address{Mathematical Sciences, Indiana University - Purdue University Indianapolis, IN 46202}
\email{mstafa@iupui.edu}
\date{\today}
\subjclass[2010]{Primary 14L30, 55N10, 14P25; Secondary 22E99}
\keywords{representation space, character variety, Poincar\'e series, nilpotent group}
\begin{document}

\begin{abstract}
For any compact and connected Lie group $G$ and any free abelian or 
free nilpotent group $\Gamma$ , we determine the cohomology of the 
path component of the trivial representation of the representation space 
(character variety) $\Rep(\Gamma,G)_1$, with coefficients in a 
field $\F$ with $\rm char (\F)$ either 0 or relatively prime to 
the order of the Weyl group $W$. We give explicit 
formulas for the Poincar\'e series. In addition we study 
$G$-equivariant stable decompositions of subspaces $\X(q,G)$ 
of the free monoid $J(G)$ generated by the Lie group $G$, obtained from
finitely generated free nilpotent group representations. 
\end{abstract}

\maketitle

\tableofcontents

\section{Introduction}

Let $G$ be a compact and connected Lie group 
and $\Gamma$ be a finitely generated discrete group.
In this article we study 
representation spaces given by orbit spaces
$\Rep(\Gamma,G)=\Hom(\Gamma,G)/G$ 
obtained from spaces of homomorphisms $\Hom(\Gamma,G)\subseteq G^n$ 
endowed with the subspace topology of $G^n$, modulo the conjugation action of $G$.
If $G$ is compact the representation space $\Rep(\Gamma,G)$ coincides with
the character variety $\Hom(\Gamma,G)/\!\!/G$,
sometimes also denoted by $\mathfrak{X}_G(\Gamma)$. 
On the other hand, if $G$ is the group of complex or real points of a 
reductive linear algebraic group and $C \leqs G$ is a
maximal compact subgroup, Bergeron~\cite[Theorem II]{bergeron}
showed that for $\Gamma$ finitely generated nilpotent, there is 
a homotopy equivalence $\Hom(\Gamma,G)/\!\!/G \simeq \Hom(\Gamma,C)/C$ (for non-examples see~\cite{adem2007commuting}).
Of particular interest here are the cases when $\Gamma$ is
either a free abelian or free nilpotent group.
For these groups we give a complete answer for the Poincar\'e series
of the connected components of the trivial representation
$\Rep(\Gamma,G)_1\subseteq \Rep(\Gamma,G)$.
The answer depends only on the Weyl group $W$ of $G$ and its
action on the maximal torus $T$.
Ramras and Stafa~\cite{ramrasstafa} give an analogous
complete answer for the Poincar\'e series
of the connected components of the trivial representation
$\Hom(\Gamma,G)_1\subseteq \Hom(\Gamma,G)$.
The answer for $\Hom(\Gamma,G)_1$ is governed by the maximal
torus $T$ of rank $r$, the Weyl group $W$ and its corresponding characteristic
degrees $d_1,\dots,d_r$ as follows
$$
P(\Hom(\Z^n,G)_1;q)=|W|^{-1} \prod_{i=1}^r (1-q^{2d_i}) 
\sum_{w\in W} \frac{\det(1+qw)^n}{\det(1-q^2w)} .
$$
In our case characteristic degrees do not appear in the 
Poincar\'e series of $\Rep(\Gamma,G)_1$.

The topology of the spaces $\Hom(\Gamma,G)$ and $\Rep(\Gamma,G)$ for 
free abelian and nilpotent groups (and certainly other discrete groups) 
has attracted considerable attention recently \cite{stafa.comm.2}.
The space of homomorphisms $\Hom(\Z^n, G)$ is known as
{the space of ordered pairwise commuting $n$-tuples in $G$}, and the representation
space $\Rep(\Z^n,G)$ can be identified with the moduli space of
{isomorphism classes of flat connections on principal $G$-bundles over
the $n$-torus}. These spaces and their variations including the space of
almost commuting elements \cite{borel2002almost},
have also been studied in various settings outside topology, most notably including
work of  Witten \cite{witten1,witten2} and Kac--Smilga \cite{kac.smilga}  
on supersymmetric Yang-Mills theory.

\subsection*{Statements of main results}

Let $F_n$ be the free group on $n$ letters, with 
{descending central series} given by
$\cdots \unlhd \Gamma^{3} \unlhd \Gamma^2 \unlhd  \Gamma^1 = F_n.$
Then there is a filtration of $G^n$ by spaces of homomorphisms
$$
\Hom(F_n/\Gamma^2,G) \subseteq \Hom(F_n/\Gamma^3,G) \subseteq \Hom(F_n/\Gamma^4,G) \subseteq \cdots \subseteq G^n.
$$
The main variation of the spaces $\Hom(F_n/\Gamma^q,G)$ studied in this paper are spaces
of almost commuting or almost nilpotent $n$-tuples in $G$,
which are defined in Section~\ref{sec: Definitions}. 
Let $J(G)$ be the free monoid generated by $G$, also known as
the James reduced product on $G$. The descending central series above
can be used to define a sequence of spaces $\X(q,G)$ that filter $J(G)$
$$
\X(2,G) \subseteq \X(3,G) \subseteq \X(4,G) \subseteq \cdots \subseteq J(G)
$$
also defined in Section~\ref{sec: Definitions}, previously defined and used in work of 
Cohen--Stafa~\cite{stafa.comm} and Ramras--Stafa~\cite{ramrasstafa}.
In particular, we denote the space $\X(2,G)$ by $\Comm(G)$.
$G$ acts by conjugation on the elements in $\Hom(F_n/\Gamma^q,G)$ 
and elements (or words) in $\X(q,G)$.
We show that after one suspension the orbit spaces $\X(q,G)/G$
decompose into infinite wedge sums of \textit{smaller} spaces.

\begin{thm}\label{thm: decomposition of X(q,G) INTRO }
Let $G$ be a compact and connected Lie group. 
For each $q\geqslant  2$ there is a homotopy equivalence
$$\Sigma ( \X(q,G)/G) \simeq \Sigma 
		\bigvee_{n \geqslant  1} \widehat{\Hom}(F_n/\Gamma^q,G)/G. $$
\end{thm}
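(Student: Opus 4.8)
The plan is to promote the known non-equivariant James-type stable splitting of $\X(q,G)$ to a $G$-equivariant statement and then pass to orbit spaces. Recall that $\X(q,G)=\bigcup_{n\geqslant 1}\X_n(q,G)$, where $\X_n(q,G)$ consists of the reduced words $g_1\cdots g_k$ in the James monoid $J(G)$ with $k\leqslant n$ and $(g_1,\dots,g_k)\in\Hom(F_k/\Gamma^q,G)$; each inclusion $\X_{n-1}(q,G)\hookrightarrow\X_n(q,G)$ is a cofibration and
$$
\X_n(q,G)/\X_{n-1}(q,G)\;\cong\;\widehat{\Hom}(F_n/\Gamma^q,G),
$$
the quotient of $\Hom(F_n/\Gamma^q,G)$ by the closed subset of $n$-tuples with at least one coordinate equal to $e$, which in turn sits inside $G^{\wedge n}=J_n(G)/J_{n-1}(G)$. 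The conjugation action of $G$ on $J(G)$ preserves $\X(q,G)$ and every $\X_n(q,G)$, since conjugation alters neither word length nor which letters are trivial, and the induced action on the subquotient is the diagonal conjugation action on $\Hom(F_n/\Gamma^q,G)\subseteq G^n$. As $e\in G$ is conjugation-fixed, all basepoints and wedge points below are $G$-fixed and the inclusions are $G$-cofibrations.

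Next I would make the James splitting equivariant. The James--Hopf maps $\lambda_n\colon J(G)\to J(G^{\wedge n})$, sending $g_1\cdots g_k$ to $\prod_{i_1<\cdots<i_n}g_{i_1}\wedge\cdots\wedge g_{i_n}$ (product taken in lexicographic order of the increasing $n$-tuples), drive the stable splitting $\Sigma J(G)\simeq\bigvee_{n\geqslant1}\Sigma G^{\wedge n}$ and, in the form used by Cohen--Stafa~\cite{stafa.comm} and Ramras--Stafa~\cite{ramrasstafa}, the equivalence $\Sigma\X(q,G)\simeq\bigvee_{n\geqslant1}\Sigma\widehat{\Hom}(F_n/\Gamma^q,G)$. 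Two observations upgrade this. First, $\lambda_n$ carries $\X(q,G)$ into $J(\widehat{\Hom}(F_n/\Gamma^q,G))$: if $(g_1,\dots,g_k)\in\Hom(F_k/\Gamma^q,G)$ then every sub-$n$-tuple lies in $\Hom(F_n/\Gamma^q,G)$, by restriction along $F_n/\Gamma^q\to F_k/\Gamma^q$ (the lower central series is monotone under inclusion of free factors), so each smash factor of $\lambda_n(g_1\cdots g_k)$ represents a point of $\widehat{\Hom}(F_n/\Gamma^q,G)$. Second, since $G$ acts by conjugation, hence diagonally on every smash power, each $\lambda_n$ is $G$-equivariant, and so is the natural retraction $\Sigma J(Y)\to\Sigma Y$ onto the bottom wedge summand of the James splitting of $J(Y)$. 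Composing yields $G$-equivariant stable retractions $\rho_n\colon\Sigma\X(q,G)\to\Sigma\widehat{\Hom}(F_n/\Gamma^q,G)$; assembling $(\rho_n)_{n\geqslant 1}$ into a single map onto the wedge and verifying it is an equivalence one James filtration stage at a time — each stage a finite complex, with $\widehat{\Hom}(F_n/\Gamma^q,G)$ roughly $(n-1)$-connected so that the infinite wedge is harmless — reproduces, now $G$-equivariantly, the argument of the cited works. The outcome is a $G$-homotopy equivalence
$$
\Sigma\X(q,G)\;\simeq_G\;\bigvee_{n\geqslant1}\Sigma\,\widehat{\Hom}(F_n/\Gamma^q,G),
$$
with $G$ acting trivially on the suspension coordinate.

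Finally I would apply the orbit functor $(-)/G$, which preserves $G$-homotopy equivalences, commutes with pointed wedges along a $G$-fixed wedge point, and satisfies $(\Sigma Z)/G\cong\Sigma(Z/G)$ whenever $G$ fixes the basepoint and acts trivially on the suspension coordinate. This gives
$$
\Sigma(\X(q,G)/G)\cong(\Sigma\X(q,G))/G\;\simeq\;\Bigl(\bigvee_{n\geqslant1}\Sigma\,\widehat{\Hom}(F_n/\Gamma^q,G)\Bigr)/G=\Sigma\bigvee_{n\geqslant1}\widehat{\Hom}(F_n/\Gamma^q,G)/G,
$$
which is the assertion. I expect the only real obstacle to be the equivariant enhancement above: one must work with a concrete, natural model of the James--Snaith splitting — the James--Hopf maps — rather than its bare existence, and check both that these combinatorial maps respect the relations defining $\X(q,G)\subseteq J(G)$ and that they intertwine the diagonal conjugation actions; the connectivity bookkeeping behind the infinite wedge is routine, and the descent to orbit spaces is purely formal.
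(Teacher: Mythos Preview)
Your proposal is correct and follows essentially the same route as the paper: promote the James-type stable splitting of $\X(q,G)$ to a $G$-equivariant statement and then pass to orbit spaces. The paper's own argument is terser --- it records that the word-length filtration of $\X(q,G)$ is by $G$-NDR pairs and invokes the classical James splitting of $J(G)$ together with the equivariant-decomposition results of Adem--Cohen--G\'omez and Villarreal --- whereas you make the mechanism explicit via the James--Hopf maps and verify directly that they respect both the subspace $\X(q,G)\subseteq J(G)$ and the diagonal conjugation action.
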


Here the space $\widehat{\Hom}(F_n/\Gamma^q,G)$ is the quotient
of the space of homomorphisms ${\Hom}(F_n/\Gamma^q,G)$ 
by the subspace consisting of $n$-tuples
with at least one coordinate the identity. Note that these decompositions 
hold also for the component of the trivial representation, which we denote
by $\X(q,G)_1$ and $\Comm(G)_1$, as observed for instance in
\cite{ramrasstafa}. This is explained in more detail below.
The main theorem that makes it possible for the calculation 
of the Poincar\'e series is the following.

\begin{thm}\label{thm: Comm(G)/G = J(T)/W INTRO}
Let $G$ be a compact and connected Lie group with maximal torus $T$ and
Weyl group $W$. Then there is a homeomorphism 
$$
\Comm(G)_1/G \approx J(T)/W
$$
and a  homotopy equivalence
$$\Sigma ( \Comm(G)_1/G) \simeq \Sigma \bigvee_{n \geqslant  1} \left( \widehat{T}^n/W \right).$$
\end{thm}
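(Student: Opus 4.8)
The plan is to establish the homeomorphism first and to deduce the stable splitting from it, using Theorem~\ref{thm: decomposition of X(q,G) INTRO } with $q=2$. Recall from \cite{stafa.comm,ramrasstafa} that $\Comm(G)_1=\X(2,G)_1$, the path component of the trivial word, is the subspace of the James reduced product $J(G)$ consisting, in word-length $n$, of the identity component $\Hom(\Z^n,G)_1\subseteq G^n$ of the space of commuting $n$-tuples; the $G$-action is by simultaneous conjugation, which fixes the basepoint $(e,\dots,e)$ and commutes with the coordinate-deletion (degeneracy) maps of the James construction. I would first record the standard identification $\Hom(\Z^n,G)_1\approx G/T\times_W T^n$ induced by the conjugation map $(gT,(t_1,\dots,t_n))\mapsto(gt_1g^{-1},\dots,gt_ng^{-1})$, where $W=N(T)/T$ acts on $G/T$ on the right and diagonally on $T^n$: surjectivity onto the identity component is the known fact that every tuple in $\Hom(\Z^n,G)_1$ lies in a maximal torus, and the map is a homeomorphism onto its image by the usual maximal torus argument (two tuples of $T^n$ are $G$-conjugate iff they are $W$-conjugate). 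Since the conjugation $G$-action commutes with the $W$-action and acts transitively on the $G/T$ factor, passing to $G$-orbits gives a homeomorphism $\Hom(\Z^n,G)_1/G\approx T^n/W$, natural in $n$. As $-/G$ commutes with disjoint unions and with the James identifications ($G$ fixes the basepoint of $G$), and likewise $J(T)/W=\bigsqcup_n(T^n/{\sim})/W=\bigsqcup_n(T^n/W)/{\sim}$ because $W$ fixes the basepoint of $T$, assembling these homeomorphisms over all $n$ yields $\Comm(G)_1/G\approx J(T)/W$.

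For the stable splitting, apply Theorem~\ref{thm: decomposition of X(q,G) INTRO } with $q=2$ to the component of the trivial representation (legitimate by the remark following that theorem) to get
$$\Sigma(\Comm(G)_1/G)\simeq\Sigma\bigvee_{n\geqslant 1}\widehat{\Hom}(\Z^n,G)_1/G.$$
The homeomorphism above carries the ($G$-invariant) subspace of tuples with at least one identity coordinate onto the image of the ($W$-invariant) fat wedge $FW_n(T)=\{(t_1,\dots,t_n)\in T^n:\ t_i=e\text{ for some }i\}$, hence descends to
$$\widehat{\Hom}(\Z^n,G)_1/G\approx(T^n/W)\big/(FW_n(T)/W)=\big(T^n/FW_n(T)\big)/W=\widehat{T}^n/W,$$
and substituting into the wedge completes the argument.

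The only non-formal ingredient is the identification $\Hom(\Z^n,G)_1\approx G/T\times_W T^n$ used above — namely that the image of the conjugation map is exactly the identity component (so every tuple in $\Hom(\Z^n,G)_1$ is conjugate into $T$) and that the map is a homeomorphism onto it. Everything else — compatibility of $G$- and $W$-orbit formation with disjoint unions, with the James degeneracies, with collapsing fat wedges, and with reduced suspension — is routine point-set topology, all of it leaning on the fact that $G$ and $W$ fix basepoints.
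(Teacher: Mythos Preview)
Your overall strategy matches the paper's: assemble the level-wise identifications $\Rep(\Z^n,G)_1\approx T^n/W$ into the homeomorphism $\Comm(G)_1/G\approx J(T)/W$, then read off the stable splitting. There is, however, one genuine slip. You claim that the conjugation map $G/T\times_W T^n\to\Hom(\Z^n,G)_1$ is a \emph{homeomorphism}, justified by ``two tuples of $T^n$ are $G$-conjugate iff they are $W$-conjugate''. That fact proves injectivity of $T^n/W\to\Rep(\Z^n,G)_1$, not of the map before taking $G$-orbits; indeed $G/T\times_W T^n\to\Hom(\Z^n,G)_1$ is \emph{not} injective (already for $n=1$: the fibre over $e\in G$ is all of $G/T$). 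The paper is careful here: it only records that $\phi_n\colon G\times_{NT}T^n\to\Hom(\Z^n,G)_1$ is a $G$-equivariant surjection, and then argues that the induced map $T^n/W\to\Rep(\Z^n,G)_1$ is a continuous bijection of compact Hausdorff spaces. Your conclusion $\Hom(\Z^n,G)_1/G\approx T^n/W$ is correct and is all you actually use; just drop the false intermediate homeomorphism and argue directly at the quotient level, as the paper does.

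For the splitting, you and the paper diverge slightly but harmlessly. The paper goes straight from $\Comm(G)_1/G\approx J(T)/W$ to the $W$-quotient of the James splitting $\Sigma J(T)\simeq\Sigma\bigvee_{n\geqslant1}\widehat{T}^n$. You instead invoke Theorem~\ref{thm: decomposition of X(q,G) INTRO } at $q=2$ and then identify each summand $\widehat{\Hom}(\Z^n,G)_1/G$ with $\widehat{T}^n/W$. Both routes are fine; yours is a small detour through a more general result, while the paper's is one line once the homeomorphism is in hand.
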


Using this theorem and a few other observations, we describe 
the Poincar\'e series for representation spaces and their
infinite dimensional analogues.

\begin{thm}\label{thm: Poincare series of X(q,G) INTRO}
Let $G$ be a compact and connected Lie group with maximal torus $T$ and
Weyl group $W$.Then the Poincar\'e series of
$\Comm(G)_1/G$ is given by
\begin{align*}
\ds P(\Comm(G)_1/G;s)&=\frac{1}{|W|} \sum_{w\in W} 
					\left(\sum_{k\geqslant  0} (\det(1+sw)-1)^k\right).
\end{align*}
Moreover, we have $P(\X(q,G)_1/G;s)=P(\Comm(G)_1/G;s)$
for all $q\geqslant 2$.
\end{thm}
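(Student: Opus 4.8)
The plan is to compute $P(\Comm(G)_1/G;s)$ by combining the homeomorphism $\Comm(G)_1/G \approx J(T)/W$ from Theorem \ref{thm: Comm(G)/G = J(T)/W INTRO} with the stable splitting $\Sigma(\Comm(G)_1/G) \simeq \Sigma \bigvee_{n\geqslant 1}(\widehat{T}^n/W)$ from the same theorem. Since Poincar\'e series are additive over wedges and unchanged by suspension, we get $P(\Comm(G)_1/G;s) = \sum_{n\geqslant 1} \widetilde{P}(\widehat{T}^n/W;s) + 1$, where the $+1$ accounts for the basepoint component; more precisely $P(\Comm(G)_1/G;s) = 1 + \sum_{n\geqslant 1}\bar{P}(\widehat{T}^n/W;s)$ with $\bar P$ the reduced series. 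So the crux is to identify $\bar P(\widehat{T}^n/W;s)$.

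First I would handle the reduced cohomology of $\widehat{T}^n/W$. Since $\widehat{T}^n$ is the quotient of $T^n$ by the fat wedge (tuples with a coordinate equal to the identity), and $W$ acts diagonally, for a field $\F$ with $\mathrm{char}(\F)$ prime to $|W|$ (or zero) the transfer argument gives $\widetilde{H}^*(\widehat{T}^n/W;\F) \cong \widetilde{H}^*(\widehat{T}^n;\F)^W$. Now $\widetilde{H}^*(\widehat{T}^n;\F)$ is the ``new'' part of $H^*(T^n)$: writing $H^*(T;\F) = \Lambda[x_1,\dots,x_r]$ with $r = \mathrm{rank}(T)$, one has $H^*(T^n) = H^*(T)^{\otimes n}$, and $\widetilde{H}^*(\widehat{T}^n)$ is the subspace spanned by tensor monomials involving a nontrivial factor from every one of the $n$ coordinates. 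The generating function for the graded dimension of $H^*(T;\F)$ in the variable $s$ is $(1+s)^r$, so the reduced part $\widetilde{H}^*(T)$ has generating function $(1+s)^r - 1$; hence the graded dimension of $\widetilde{H}^*(\widehat{T}^n;\F)$ has generating function $((1+s)^r-1)^n$ — but this is before taking $W$-invariants.

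To extract $W$-invariants I would use the standard Molien-type / averaging formula: the $W$-equivariant graded character of $H^*(T;\F)$ is $w \mapsto \det(1 + sw)$, where $w$ acts on $H^1(T;\F) \cong \F^r$ via the (contragredient of the) reflection representation and on $\Lambda^*$ accordingly, so $\det(1+sw) = \prod_i (1+\lambda_i s)$ with $\lambda_i$ the eigenvalues of $w$. The $W$-equivariant character of $\widetilde{H}^*(\widehat{T}^n;\F)$, where $W$ acts \emph{diagonally} on the $n$ coordinates, is therefore $w \mapsto (\det(1+sw)-1)^n$. Averaging over $W$ gives $\bar P(\widehat{T}^n/W;s) = \frac{1}{|W|}\sum_{w\in W}(\det(1+sw)-1)^n$. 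Summing over $n\geqslant 1$ and adding the basepoint contribution $1 = \frac{1}{|W|}\sum_{w\in W}(\det(1+sw)-1)^0$ yields
$$
P(\Comm(G)_1/G;s) = \frac{1}{|W|}\sum_{w\in W}\left(\sum_{k\geqslant 0}(\det(1+sw)-1)^k\right),
$$
which is the claimed formula. (Convergence of the inner geometric series is formal as a power series, since $\det(1+sw)-1$ has zero constant term.)

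Finally, for the equality $P(\X(q,G)_1/G;s) = P(\Comm(G)_1/G;s)$ for all $q\geqslant 2$, I would argue that the relevant invariants do not change with $q$. By Theorem \ref{thm: decomposition of X(q,G) INTRO}, $\Sigma(\X(q,G)/G) \simeq \Sigma\bigvee_{n\geqslant 1}\widehat{\Hom}(F_n/\Gamma^q,G)/G$, and restricting to the identity component, $P(\X(q,G)_1/G;s) = 1 + \sum_{n\geqslant 1}\bar P(\widehat{\Hom}(F_n/\Gamma^q,G)_1/G;s)$. It then suffices to show $\bar P(\widehat{\Hom}(F_n/\Gamma^q,G)_1/G;s) = \bar P(\widehat{T}^n/W;s)$ for every $q$ and $n$. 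The key input here — which I expect to be the main obstacle, and which should follow from the results of Bergeron and of Ramras--Stafa invoked in the introduction, together with the transfer/Molien argument above — is that on the level of $\F$-cohomology with $|W|$ invertible, the inclusion $T^n/W \hookrightarrow \Hom(F_n/\Gamma^q,G)_1/G$ induced by the maximal torus is a cohomology isomorphism (equivalently, $H^*(\Hom(F_n/\Gamma^q,G)_1;\F) \cong H^*(T^n;\F)^W$ independently of $q\geqslant 2$, since higher nilpotent quotients of $F_n$ contribute nothing new rationally when the source is a product of a torus with a contractible-on-cohomology piece in each coordinate). Granting this, the reduced (new) parts agree coordinate-wise, the wedge summands have equal Poincar\'e series, and summing gives $P(\X(q,G)_1/G;s) = P(\Comm(G)_1/G;s)$.
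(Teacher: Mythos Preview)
Your argument is correct and uses the same underlying ingredients as the paper: the homeomorphism $\Comm(G)_1/G\approx J(T)/W$, the transfer isomorphism $H^*(-/W)\cong H^*(-)^W$, the trace identity $\sum_i \mathrm{Tr}(w|_{\Lambda^i})s^i=\det(1+sw)$, and Bergeron--Silberman for the $q\geqslant 2$ statement. The organization differs slightly. The paper works with the Bott--Samelson description $H_*(J(T))\cong \T[\widetilde{H}_*(T)]$, introduces the bigrading by cohomological and tensor degree, and computes the bigraded series $P(\Comm(G)_1/G;s,t)$ in one shot via a Molien-type formula; it then \emph{reads off} $P(\widehat{T}^k/W;s)$ as the coefficient of $t^k$. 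You run the computation in the opposite order: you use the stable splitting first, compute each $\bar P(\widehat{T}^n/W;s)$ directly as $\frac{1}{|W|}\sum_w(\det(1+sw)-1)^n$ via the diagonal character, and then sum. Either direction is fine; your packaging is arguably more elementary for the ungraded series since it bypasses the tensor-algebra formalism.

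For the second claim, your outline is right but the ``key input'' you flag is in fact sharper than a cohomology isomorphism: Bergeron--Silberman prove that the inclusion $\Hom(\Z^n,G)_1\hookrightarrow \Hom(F_n/\Gamma^q,G)_1$ is a \emph{homeomorphism} onto its image (which is the full identity component). The paper uses this to get a $G$-equivariant homotopy equivalence $\widehat{\Hom}(\Z^n,G)_1\simeq\widehat{\Hom}(F_n/\Gamma^q,G)_1$ via a map of cofibration sequences, hence $\widehat T^n/W\simeq \widehat{\Hom}(F_n/\Gamma^q,G)_1/G$; summing over $n$ then matches the wedge summands in the stable decompositions of $\X(q,G)_1/G$ and $\Comm(G)_1/G$. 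Your speculative parenthetical about ``contractible-on-cohomology pieces'' is unnecessary once you invoke this homeomorphism.
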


From this theorem and the stable decompositions in 
Theorem~\ref{thm: decomposition of X(q,G) INTRO } of the spaces $\X(q,G)_1/G$
and their corresponding Poincar\'e series in 
Theorem~\ref{thm: Poincare series of X(q,G) INTRO}, we finally
show the main theorem.

\begin{thm}\label{thm: Poincare series of Rep(Z^n,G) INTRO }
Let $G$ be a compact and connected Lie group with maximal torus
$T$ and Weyl group $W$. Then the Poincar\'e series of
 $\Rep(\Z^n,G)_1$ is given by
\begin{align*}
    P(\Rep(\Z^n,G)_1;s)&=\frac{1}{|W|} \sum_{w\in W} \det(1+sw)^n.
\end{align*}
Moreover, we have 
$P(\Rep(F_n/\Gamma^q,G)_1;s)=P(\Rep(\Z^n,G)_1;s)$
for all $q\geqslant 2$.
\end{thm}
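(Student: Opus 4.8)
The plan is to deduce the Poincar\'e series of $\Rep(\Z^n,G)_1$ from Theorem~\ref{thm: Poincare series of X(q,G) INTRO} together with the stable decomposition of Theorem~\ref{thm: decomposition of X(q,G) INTRO } (restricted to the component of the trivial representation), by matching wedge summands. Concretely, the stable decomposition gives
\[
\Sigma\bigl(\X(q,G)_1/G\bigr)\simeq \Sigma\bigvee_{n\geqslant 1}\widehat{\Hom}(F_n/\Gamma^q,G)_1/G,
\]
so passing to reduced cohomology (over $\F$, with the char hypothesis on $|W|$ in force) and summing the geometric-type series on the left, we get
\[
P\bigl(\X(q,G)_1/G;s\bigr)=1+\sum_{n\geqslant 1}\widetilde P\bigl(\widehat{\Hom}(F_n/\Gamma^q,G)_1/G;s\bigr),
\]
where $\widetilde P$ denotes the reduced Poincar\'e series. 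The first step is therefore to extract, from the closed form in Theorem~\ref{thm: Poincare series of X(q,G) INTRO}, the individual summand $\widetilde P(\widehat{\Hom}(F_n/\Gamma^q,G)_1/G;s)$ as the ``degree-$n$ part'' in a suitable sense. Expanding $\sum_{k\geqslant 0}(\det(1+sw)-1)^k$ and collecting the contributions that are homogeneous of degree $n$ in the ``letters'' — equivalently, tracking the James-filtration word-length grading on $J(T)/W$ — should identify the $n$-th summand as $|W|^{-1}\sum_{w\in W}\bigl(\det(1+sw)-1\bigr)$ raised appropriately; but because $J(T)/W$ is the quotient of a James construction, the word-length filtration on $J(G)/G$ matches the wedge index $n$ in Theorem~\ref{thm: decomposition of X(q,G) INTRO }, and the summand of index $n$ has reduced Poincar\'e series equal to the reduced Poincar\'e series of $\widehat{\Hom}(F_n/\Gamma^q,G)_1/G$, which in turn is $P(\Rep(F_n/\Gamma^q,G)_1;s)$ minus its lower-word-length pieces. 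Then I would invoke the relation $P(\Rep(F_n/\Gamma^q,G)_1;s)=\sum_{m\leqslant n}(\text{index-}m\text{ summand})$ coming from the stable splitting of $\Hom(F_n/\Gamma^q,G)_1/G$ into the $\widehat{\Hom}(F_m/\Gamma^q,G)_1/G$ for $m\leqslant n$ (the standard James-type wedge decomposition, valid after the hypothesis on $\operatorname{char}\F$), and sum.

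An alternative and cleaner route, which I would actually carry out, is to go directly through Theorem~\ref{thm: Comm(G)/G = J(T)/W INTRO}: since $\Comm(G)_1/G\approx J(T)/W$, and since $\Hom(\Z^n,G)_1/G$ sits inside $J(T)/W$ as (the $W$-quotient of) the $n$-th stage $J_n(T)$ of the James filtration $T^n/W$ — more precisely $\Rep(\Z^n,G)_1\approx T^n/W$ by the standard identification (the $n$-fold product version of Theorem~\ref{thm: Comm(G)/G = J(T)/W INTRO}, which is implicit in the proof there) — the Poincar\'e series of $\Rep(\Z^n,G)_1$ is just the Poincar\'e series of $T^n/W$. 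By the transfer argument (using $\operatorname{char}\F\nmid|W|$), $H^*(T^n/W;\F)=H^*(T^n;\F)^W=\bigl((H^*(T;\F))^{\otimes n}\bigr)^W$, and $H^*(T;\F)=\Lambda[x_1,\dots,x_r]$ with $W$ acting on the degree-one part by the reflection representation (as a $\Z$-lattice action, reduced mod $\operatorname{char}\F$). The Poincar\'e series of the $W$-invariants of $(\Lambda V)^{\otimes n}=\Lambda(V^{\oplus n})$ is computed by the usual Molien-type average
\[
P(T^n/W;s)=\frac{1}{|W|}\sum_{w\in W}\det\bigl(1+sw\bigr)^{\,n},
\]
since the graded trace of $w$ on $\Lambda(V)$ is $\det(1+sw)$ and hence on $\Lambda(V^{\oplus n})$ it is $\det(1+sw)^n$; averaging over $W$ picks out the invariants. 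This gives the displayed formula immediately.

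For the ``moreover'' clause, I would argue that the inclusion $\Rep(\Z^n,G)_1\hookrightarrow \Rep(F_n/\Gamma^q,G)_1$ induces a cohomology isomorphism for every $q\geqslant 2$. This should follow the same pattern as the $\X(q,G)$ statement in Theorem~\ref{thm: Poincare series of X(q,G) INTRO}: one shows that the relevant quotient $\widehat{\Hom}(F_n/\Gamma^q,G)_1/G$ has the same $\F$-cohomology for all $q\geqslant 2$ (e.g.\ by Bergeron-type or Baird-type descriptions reducing everything to the maximal torus, where the descending-central-series condition becomes vacuous because $T$ is abelian), and then the wedge decomposition of Theorem~\ref{thm: decomposition of X(q,G) INTRO } bootstraps this from the $\X(q,G)_1/G$ equality $P(\X(q,G)_1/G;s)=P(\Comm(G)_1/G;s)$ already established. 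Summing the index-$m$ summands for $m\leqslant n$, which are $q$-independent, yields $P(\Rep(F_n/\Gamma^q,G)_1;s)=P(\Rep(\Z^n,G)_1;s)$.

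The main obstacle I anticipate is making rigorous the identification $\Rep(\Z^n,G)_1\cong T^n/W$ as a cohomology statement under the mod-$p$ hypothesis on $\F$ — that is, ensuring the transfer/averaging argument is valid and that the $W$-action on $H^*(T;\F)$ really is (the mod-$p$ reduction of) the exterior algebra on the reflection representation, so that the trace computation giving $\det(1+sw)^n$ goes through verbatim. Once that identification and the $q$-independence of the summands are in place, the rest is a short Molien-style averaging computation. I would present the direct route (via $T^n/W$ and Molien) as the main proof and remark that it is consistent with summing the wedge summands of the stable decomposition.
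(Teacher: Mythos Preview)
Your direct route via the homeomorphism $\Rep(\Z^n,G)_1\approx T^n/W$ and the Molien average on $\Lambda(V^{\oplus n})$ is correct and in fact more economical than the paper's argument. The paper takes a longer path: it first establishes the bigraded Hilbert--Poincar\'e series of $\Comm(G)_1/G\approx J(T)/W$ (Theorem~\ref{thm: Poincare series of Comm(G)/G}), then reads off from the tensor grading the Poincar\'e polynomial of each stable summand $\widehat{T}^k/W$ as $|W|^{-1}\sum_{w}(\det(1+sw)-1)^k$ (Proposition~\ref{prop: homology of hat T^n/W}), and finally uses the stable splitting of Theorem~\ref{thm: decomposition rep} to sum these with binomial coefficients, collapsing via the binomial theorem to $\det(1+sw)^n$. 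Your one-step Molien computation on $H^*(T^n)^W$ bypasses the stable decomposition entirely; what the paper's route buys in exchange is the intermediate bigraded series $P(\Rep(\Z^n,G)_1;s,t)$, with the extra variable $t$ tracking the James word-length filtration, which your direct computation does not naturally produce.

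For the ``moreover'' clause the paper is again more direct than you anticipate: Bergeron--Silberman prove that the inclusion $\Hom(\Z^n,G)_1\hookrightarrow\Hom(F_n/\Gamma^q,G)_1$ is a \emph{homeomorphism}, not merely a cohomology equivalence, so the identification persists after dividing by $G$ and the equality of Poincar\'e series is immediate. Your plan to bootstrap from the $q$-independence of the wedge summands $\widehat{\Hom}(F_n/\Gamma^q,G)_1/G$ works but is unnecessary here; it is, however, essentially the argument the paper does use for the analogous statement about $\X(q,G)_1/G$.
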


Bergeron and Silberman \cite{bergeron2016note} show that if
$\Gamma$ is a finitely generated nilpotent group, then
$\Rep(\Gamma,G)_1$ has the same homology as $\Rep(\Gamma/\Gamma^q,G)_1$
with coefficients in a field with characteristic not dividing $|W|$.
Therefore, Theorem \ref{thm: Poincare series of Rep(Z^n,G) INTRO }
applies also to all such $\Gamma$, in particular to free
nilpotent groups $F_n/\Gamma^q$.

\subsection*{Structure of the paper}

In Section \ref{sec: Definitions} we first define and study 
topological properties of a family of spaces $\B_n(q,G,K) \subseteq G^n$
and their infinite dimensional analogue $\X(q,G,K)\subset J(G)$. 
In Section \ref{sec: G-equivar. decomp.} we show that
after one suspension we obtain a $G$-equivariant stable decomposition for each
of these spaces, proving also Theorem~\ref{thm: decomposition of X(q,G) INTRO }. 
In section \ref{sec: stable decomp of Rep and Comm mod G} we describe the stable
decomposition and the homeomorphism in Theorem~\ref{thm: Comm(G)/G = J(T)/W INTRO}.
We describe the Hilbert-Poincar\'e series for these representation 
spaces and prove Theorem \ref{thm: Poincare series of X(q,G) INTRO}
and Theorem \ref{thm: Poincare series of Rep(Z^n,G) INTRO } in
Section \ref{sec: Poincare series of Rep(Z^n,G)}. 
The last section is devoted to examples.

\subsection*{Acknowledgments} The author would like to thank
D. Ramras for the continuous and productive conversations on the subject 
and for carefully reading this manuscript.

\section{Character varieties and representation spaces}\label{sec: Rep spaces and Char var}

In this section we mention some standard notions about 
character varieties and representation spaces.
Let $G$ be a compact Lie group. By Peter-Weyl
theorem there is a faithful embedding $G \hookrightarrow \GL(n,\R)$ 
for sufficiently large $n$. This gives $G$ the structure of a
linear algebraic group. The complexification of $G$ is the group
$G'=G_\C$ given by the zero locus in $\GL(n,\C)$ 
of the defining ideal of $G$. The group $G'$ is called
a complex algebraic group and is independent of the embedding
$G \hookrightarrow \GL(n,\R)$ given by Peter-Weyl.
For example the compact Lie group $\SU(n)$ can be seen as a subgroup
of $\GL(2n,\R)$ with complexification $\SL(n,\C) \subset \GL(2n,\C).$
It is a well-known fact that the inclusion $G\hookrightarrow G'$
of $G$ to its complexification $G'$ is a homotopy equivalence.
Another fact that we should mention is the relation between 
complex reductive algebraic groups and compact Lie groups.
Namely, a complex linear algebraic group is reductive if and only 
if it is the complexification of a compact Lie group. 
For details see for instance Onischick and Vinberg~\cite{onishchik2012lie}.

Let $V$ be a complex vector space, $G\subset \GL(V)$ a complex 
reductive algebraic group, and $X \subset V$ a $G$-variety upon
which $G$ acts algebraically. This endows the ring of regular
functions $\C[X]$ with an action of $G$, with corresponding
ring of $G$-invariant regular functions denoted $\C[X]^G.$
The affine {\it geometric invariant theory quotient}, 
or GIT quotient of $X$ is defined by
$$ 
X/\!\!/G := {\rm Spec} (\C[X]^G)
$$
consisting of the closed $G$-orbits.

If $G$ is a complex reductive linear algebraic group and $\Gamma$ 
is finitely generated, then the space of homomorphisms $\Hom(\Gamma,G)$ 
has the structure of an affine algebraic variety endowed with an 
action of $G$ by conjugation. If $\Gamma$ has $n$-generators, then
$\Hom(\Gamma,G)$ can be seen as a subspace of $G^n$, by identifying
each homomorphism $f:\Gamma \to G$ with the $n$-tuple given by
the image of the generators of $\Gamma$ under $f$. 
The space $\Hom(\Gamma,G)\subset G^n$ can then be given the subspace topology.
Moreover, the action of $G$ on $\Hom(\Gamma,G)$ corresponds to the 
restriction of the diagonal conjugation action of $G$ on the product $G^n$.
The GIT quotient 
$$
\mathfrak{X}_G(\Gamma):= \Hom(\Gamma,G)/\!\!/G 
$$
is called the \textit{$G$-character variety of $\Gamma$} and the topological 
orbit space 
$$
\Rep(\Gamma,G):= \Hom(\Gamma,G)/G 
$$
is called the \textit{representation space of $\Gamma$ into $G$}.
If $G$ is compact then $\mathfrak{X}_G(\Gamma)=\Rep(\Gamma,G)$.
Moreover, it was shown by Florentino and Lawton \cite{FlorentinoLawton2014}
that if $K\leqslant G$ is a maximal compact subgroup, then 
$\Hom(\Z^n,G)/\!\!/G$ deformation retracts onto $\Hom(\Z^n,K)/K$.
In this article we always assume that $G$ is compact and connected,
but this discussion shows that results about representation spaces will imply
the same results about the corresponding character varieties.

\section{Spaces of homomorphisms and James construction}\label{sec: Definitions}

For a group $Q$ define the subgroups 
$\Gamma^k(Q) \leqs Q$ inductively by setting $\Gamma^1(Q)=Q$, and for all $i>0$ define
$\Gamma^{i+1}(Q):=[\Gamma^i(Q),Q].$ This way we obtain a normal series
$$
\cdots \unlhd \Gamma^{i+1}(Q) \unlhd \Gamma_p^i(Q) \unlhd 
		\cdots \unlhd \Gamma^2(Q) \unlhd  \Gamma^1(Q) = Q 
$$
called the \textit{descending central series of $Q$}. 
If $Q$ is the the free group $F_n$ we write $\Gamma^{q}=\Gamma^{q}(F_n)$. 
Using this normal series we define filtrations of $G^n$ by spaces 
of homomorphisms, and related filtrations of the infinite 
dimensional analogue space $J(G)$, which can be seen as a 
free monoid with prescribed topology, also
known as the James reduced product on $G$.

\subsection{Subspaces of $G^n$}

Suppose $K\leqs G$ is a closed central subgroup of the compact and connected
Lie group $G$, that is $K\leqs Z(G).$
Borel--Friedman--Morgan \cite{borel2002almost} and Adem--Cohen--Gomez 
\cite{adem.cohen.gomez} studied the spaces of
almost commuting tuples in $G$ to study the spaces of commuting tuples.
They are defined as follows. A $K$-almost commuting $n$-tuple in $G$ is an
$n$-tuple $(g_1,\dots,g_n)\in G^n$ such that $[g_i,g_j]\in K.$ Therefore,
the space of \textit{$K$-almost commuting $n$-tuples in $G$} is given by
$$
\B_n(G,K):=\{(g_1,\dots,g_n)\in G^n: [g_i,g_j]\in K \text{ for all }i,j\}\subset G^n
$$
with the subspace topology of $G^n$. This space can also be seen as the space
of homomorphisms $f:F_n \to G$ such that $f(\Gamma^2)\subset K$.
This point of view will allow us to define other variations of this space.
In particular, if $K=1_G$ then $\B_n(G,K)=\Hom(\Z^n,G)$. Moreover, as noted for instance in
\cite{adem.cohen.gomez}, the quotient of Lie groups $G \to G/K$ gives a principal $K^n$-bundle
$$
K^n \to G^n \to (G/K)^n
$$
which restricts to $K$-almost commuting tuples to give a principal $K^n$ bundle
$$
K^n \to \B_n(G,K)\to \Hom(\Z^n,G/K),
$$
since a homomorphism satisfying $f(\Gamma^2)\subset \{1_G\}$ is equivalent 
to a homomorphism $f:\Z^n \to G/K.$ 
Similarly, define the spaces $\B_n(q,G,K)\subset G^n$ by
$$\B_n(q,G,K):=\{f: F_n \to G \mid f(\Gamma^q)\subset K\} \subset G^n,$$
where $q=2$ gives $\B_n(2,G,K)=\B_n(G,K) \subset G^n.$
For fixed $n$ and varying values of $q$, we get a filtration of $G^n$
$$
\B_n(2,G,K) \subset \B_n(3,G,K) \subset \B_n(4,G,K) \subset \cdots \subset G^n,
$$
and for $K=1_G$ we obtain the ordinary filtration of $G^n$
$$
\Hom(F_n/\Gamma^2,G) \subset \Hom(F_n/\Gamma^3,G) 
	\subset \Hom(F_n/\Gamma^4,G) \subset \cdots \subset G^n.
$$
A special version of the following lemma was shown in \cite[Lemma 2.3]{adem.cohen.gomez}.

\begin{lem}
Let $G$ be a compact and connected Lie group and $K\leqs Z(G)$ 
a closed subgroup. Then for any $q \geqslant 2$ the bundle $G^n\to (G/K)^n$ 
induces a principal $K^n$-bundle
$$
\varphi_n\colon \B_n(q,G,K) \to \Hom(F_n/\Gamma^q,G/K).
$$
\end{lem}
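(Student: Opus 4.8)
The plan is to show that $\B_n(q,G,K)$ is nothing but the preimage of $\Hom(F_n/\Gamma^q,G/K)$ under the $n$-fold power of the quotient map $p\colon G\to G/K$, and then to invoke the standard fact that the restriction of a principal bundle to the preimage of a subspace of the base is again a principal bundle with the same structure group.

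First I would set up the maps. Since $K\leqs Z(G)$ is closed, it is a closed normal subgroup, so $p\colon G\to G/K$ is a Lie group homomorphism and a principal $K$-bundle; hence $p^n\colon G^n\to (G/K)^n$ is a principal $K^n$-bundle. Given $(g_1,\dots,g_n)\in G^n$, write $f\colon F_n\to G$ for the corresponding homomorphism. Postcomposition with $p$ gives a homomorphism $p\circ f\colon F_n\to G/K$, and since $F_n$ is free on the generators $x_1,\dots,x_n$ and both $p\circ f$ and the homomorphism determined by $(p(g_1),\dots,p(g_n))$ agree on these generators, they coincide. Thus the homomorphism associated with $p^n(g_1,\dots,g_n)$ is exactly $p\circ f$.

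Next I would descend the defining condition. Because $K=\ker p$, we have $f(\Gamma^q)\subseteq K$ if and only if $p\bigl(f(\Gamma^q)\bigr)=\{1_{G/K}\}$, that is, if and only if $(p\circ f)(\Gamma^q)=\{1_{G/K}\}$; and the latter is precisely the statement that $p\circ f$ factors through $F_n/\Gamma^q$, i.e. that $p^n(g_1,\dots,g_n)\in\Hom(F_n/\Gamma^q,G/K)$. Therefore
$$\B_n(q,G,K)=(p^n)^{-1}\bigl(\Hom(F_n/\Gamma^q,G/K)\bigr)$$
as a subspace of $G^n$, and $\varphi_n$ is the corestriction of $p^n$ to this preimage. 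Since $\Hom(F_n/\Gamma^q,G/K)$ carries the subspace topology from $(G/K)^n$, restricting the principal $K^n$-bundle $p^n$ over it yields a principal $K^n$-bundle $\varphi_n\colon\B_n(q,G,K)\to\Hom(F_n/\Gamma^q,G/K)$, as claimed.

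I do not anticipate a real obstacle: the argument is essentially formal, hinging only on the identity $p\circ f=\bar f$ and on $\B_n(q,G,K)$ being a full preimage, after which local triviality, the $K^n$-action, and freeness are all inherited from $p^n$ by restriction. The one point worth stating carefully is that centrality (or merely normality) of $K$ is what makes $G/K$ a group and $p$ a homomorphism, which is needed for $p\circ f$ to be a homomorphism; this discussion also recovers the earlier remark that for $K=1_G$ the spaces $\B_n(q,G,1_G)$ form the usual filtration of $G^n$ by the $\Hom(F_n/\Gamma^q,G)$.
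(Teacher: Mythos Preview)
Your proof is correct and follows essentially the same approach as the paper: both hinge on the observation that a homomorphism $f\colon F_n\to G$ satisfies $f(\Gamma^q)\subseteq K$ precisely when the induced map $p\circ f\colon F_n\to G/K$ factors through $F_n/\Gamma^q$, so that $\B_n(q,G,K)$ is exactly the preimage of $\Hom(F_n/\Gamma^q,G/K)$ under $p^n$. The paper's proof simply cites the $q=2$ case from Adem--Cohen--Gomez and remarks that the same argument works for all $q$, whereas you spell out the preimage identification and the restriction-of-bundle step explicitly; your version is thus a fuller write-up of the same idea.
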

\begin{proof}
The case when $q=2$ was proved in \cite[Lemma 2.3]{adem.cohen.gomez}. 
The proof for other values of $q$ is the same by noting that a 
homomorphism $f \colon F_n \to G$ satisfying $f(\Gamma^q)=\{1_G\}$ is equivalent 
to a homomorphism $f:F_n/\Gamma^q \to G/K.$
\end{proof}

The spaces $\B_\ast(q,G,K)$ have the structure of simplicial spaces
as follows. The Cartesian products $B_n G:=G^n$, for varying $n$, have the 
structure of simplicial spaces. That is, there are standard face and degeneracy
maps 
\begin{align*}
\sigma_i(g_1,\dots,g_n)&=(g_1,\dots,g_i,1_G,g_{i+1},\dots,g_n), \text{ and}\\
\delta_j(g_1,\dots,g_n)&= \begin{cases}
    (g_2,\dots,g_n) 					& \text{if } j=0,\\
    (g_1,\dots,g_j g_{j+1},\dots,g_n) 	& \text{if } 0<j<n,\\
    (g_1,\dots,g_{n-1})					& \text{if } j=n,
  \end{cases}
 \end{align*}
that satisfy the simplicial relations. Recall that
the geometric realization of $B_* G$ is the classifying space of $G$. 
The same face and degeneracy maps together with the relations restricted to 
$\B_\ast(q,G,K)$ make the spaces $\B_\ast(q,G,K)$ into $G$-simplicial spaces since
\begin{itemize}
\item the maps $\sigma_i$ and $\delta_j$ preserve the spaces $\B_\ast(q,G,K)$
as they are induced by homomorphisms between free groups,
\item these group homomorphisms satisfy the property
		$\sigma_i(\Gamma^q(F_n))\subset \Gamma^q(F_{n+1})$ and 
		$\delta_i(\Gamma^q(F_n))\subset \Gamma^q(F_{n-1})$, and
\item conjugation by $G$ leaves the spaces $\B_n(q,G,K)$ invariant.
\end{itemize}

\subsection{Subspaces of $J(G)$}

The {\it James reduced product} $J(X)$ is a classical 
construction introduced by James \cite{james1955reduced} 
and can be defined for any CW-complex $X$ with a good basepoint $*$.
In our paper $X$ is mainly a compact Lie group with basepoint
the identity element $1_G$. In general, $J(X)$ is defined as the topological space
\begin{equation}
J(X):= \bigg( \bigsqcup_{n\geqslant  0} X^n \bigg)/\sim
\end{equation}
where $\sim$ is the relation
$(\dots,*,\dots) \sim (\dots,\widehat{*},\dots)$
omitting the coordinates equal to the basepoint.
Alternatively this has the structure of the free monoid generated by the
elements of $X$ with the basepoint acting as the identity element.
The space $J(X)$ is weakly homotopy equivalent
to $\Omega\Sigma X$, the loops on the suspension of $X$ 
and the suspension of $J(X)$ is given by
\begin{equation}
\Sigma J(X) \simeq \Sigma \bigvee_{k \geqslant  1} \widehat{X}^n,
\end{equation}
where $\widehat{X}^n$ is the $n$-fold smash product of $X$.
This fact will be important below in our computations of the
Poincar\'e series. Let $R$ be a commutative ring with 1.
Bott and Samelson \cite{bott1953pontryagin} showed
that if the homology of $X$ is a free $R$-module, then
the homology of $J(X)$ with $R$ coefficients is isomorphic as an algebra
to the tensor algebra $\T[\widetilde{H}_\ast(X;R)]$ generated
by the reduced homology of $X$, another crucial fact that will be used below.

For compact and connected $G$ with maximal torus $T$ 
we want to study the spaces $J(T) \subset J(G)$ and the 
subspaces of $J(G)$ defined next, by assembling all the spaces
$\B_n(q,G,K)$ into a single infinite dimensional space.
Define 
\begin{equation}
\X(q,G,K):= \left(\bigsqcup_{n\geqslant  1} \B_n(q,G,K) \right)/\sim,
\end{equation}
where $\sim$ is the relation in the definition of the James reduced product.
For fixed $K$ we obtain a filtration 
\begin{equation}\label{eq: filtration of J(G)}
J(T) \subset \X(2,G,K) \subset \X(3,G,K) \subset \X(4,G,K)
	\subset \cdots \subset J(G).
\end{equation}
When $K=1_G$ we obtain the corresponding filtration studied in \cite{stafa.comm,ramrasstafa}, namely
\begin{equation}
J(T) \subset \X(2,G)\subset \X(3,G) \subset
\X(4,G) \subset \cdots \subset \X(\infty,G)=J(G).
\end{equation} It is important to single out the space 
$\Comm(G)=\X(2,G)$, which will play a more significant role here.
Each of the spaces $\X(q,G,K)$ plays an important role in the study
of the homomorphism spaces $\B_n(q,G,K)$, since stably all these 
homomorphism spaces can be extracted from the stable structure 
of the corresponding space $\X(q,G,K)$, a fact proved in the next section.

\begin{rmk}
The spaces $\B_n(q,G,K)$ and $\X(q,G,K)$ are not path connected in general.
For example Sjerve and Torres-Giese \cite{SjerveGiese} showed that $\Hom(\Z^n,\SO(3))$
has several path components, including the path component of the trivial
representation denoted by $\Hom(\Z^n,\SO(3))_1$, and a finite number of other
path component all homeomorphic to $S^3/Q_8$, the three sphere modulo quaternions.
It was then shown in \cite{stafa.thesis} that $\X(2,\SO(3))$
also consists of the path component of the trivial representation, denoted
$\X(2,\SO(3))_1$, and an infinite number of copies of $S^3/Q_8$. 
On the other hand, if $G=\SU(m)$, then both spaces $\Hom(\Z^n,\SU(m))$
and $\X(2,\SU(m))$ are path connected. We define the space 
$$\B_n(q,G,K)_1 \subseteq \B_n(q,G,K)$$
to be the path component of the trivial representation, which corresponds to the
$n$-tuple $(1_G,\dots,1_G)$. Similarly we define the space
\begin{equation}
\X(q,G,K)_1= \left(\bigsqcup_{n\geqslant  1} \B_n(q,G,K)_1 \right)/\sim \,\,\, \subseteq \X(q,G,K)
\end{equation}
to be the path component of the class of the trivial representation. 
\end{rmk}

\begin{rmk}
It is important to distinguish the cases when $K=1_G$, since then
$\B_n(q,G,1)=\Hom(F_n/\Gamma^q,G)$ and $\X(q,G,1)=\X(q,G)$
are the only spaces of interest, for which we will give
explicit answers for the Poincar\'e series in 
Section~\ref{sec: Poincare series of Rep(Z^n,G)}.

There is also an analogous {\it $p$-descending central series of $Q$} as follows.
Suppose $Q$ is a group and $p$ is a prime number. Define the subgroups 
$\Gamma^k_p(Q) \leqs Q$ inductively by setting $\Gamma^1_p(Q)=Q$, and 
for all $i>0$ define $\Gamma^{i+1}_p(Q):=[\Gamma^i(Q),Q](\Gamma^i(Q))^p.$
We obtain a normal series for $Q$
$$
\cdots \unlhd \Gamma^{i+1}_p(Q) \unlhd \Gamma^i_p(Q) \unlhd 
		\cdots \unlhd \Gamma^2_p(Q) \unlhd  \Gamma^1_p(Q) = Q.
$$
By abuse of terminology one can think of the ordinary 
{descending central series of $Q$} as the case $p=0$. If $Q$ is the
free group $F_n$ and $\Gamma^i_p=\Gamma^i_p(F_n)$, for fixed $p$
define the spaces
$\Hom(F_n/\Gamma^q_p,G)\subset G^n$, which for varying $n$ have the structure of
simplicial spaces with geometric realization $B(q,G,p)\subset BG.$
These spaces have very interesting features. For instance, 
if $G$ is finite the spaces $B(2,G,p)\subset BG$  is a space
assembled from the $p$-elementary abelian subgroups in $G$ and
the inclusion $B(2,G,p)\hookrightarrow BG$ can detect mod $p$ cohomology
of $G$, see \cite[Proposition 3.4]{AdemCohenTorresGiese}. It would be compelling
to understand the constructions in this paper using the $p$-descending
central series.

\end{rmk}

\section{$G$-equivariant stable decompositions}\label{sec: G-equivar. decomp.}

There is a filtration associated to the spaces $\B_n(q,G,K)$ defined above given by
$$
F_n^n(\B_n(q,G,K))\subset F_n^{n-1}(\B_n(q,G,K)) \subset \cdots 
		\subset F_n^1(\B_n(q,G,K)) \subset \B_n(q,G,K), 
$$
where $F_n^i(\B_n(q,G,K)) \subset \B_n(q,G,K)$ is the subspace consisting 
of $n$-tuples with at least $i$ coordinates the identity element $1_G$.
The pairs $(F_n^{k},F_n^{k+1})$ are NDR-pairs, by work of
Adem--Cohen ~\cite{adem2007commuting}, 
Adem--Cohen--Gomez~\cite{adem.cohen.gomez} and 
Villarreal~\cite{villarreal2016cosimplicial}. To be more precise,
the filtrations $F_n^{k+1}$ have the homotopy type of a $G$-CW complex
and it can be seen as a $G$-subcomplex of $F_n^{k}$.
Moreover, the simplicial spaces
$\B_\ast(q,G,K)$ are simplicially $G$-NDR as explained in
\cite[\S 4]{adem.cohen.gomez} and filtration quotients are given by
$$
F_n^{k}(\B_n(q,G,K))/F_n^{k+1}(\B_n(q,G,K))
	\approx \bigvee_{n \choose k} \B_{n-k}(q,G,K)/(F_{n-k}^{1}(\B_k(q,G,K)).
$$
For details see \cite[\S]{adem2007commuting}
or in \cite[\S 5]{adem.cohen.gomez}.
Therefore, we obtain the following theorem.
\begin{thm}\label{thm: stable decomposition 1}
Let $G$ be a compact and connected Lie group and $K$ a connected central subgroup.
Then there is a $G$-equivariant homotopy equivalence
$$\Sigma ( \B_n(q,G,K)) \simeq \Sigma 
		\bigvee_{1 \leqslant  k \leqslant  n} \bigvee_{n \choose k}  
				\B_k(q,G,K)/F_k^1(\B_k(q,G,K)), $$
and hence a homotopy equivalence
$$\Sigma ( \B_n(q,G,K)/G) \simeq \Sigma 
		\bigvee_{1 \leqslant  k \leqslant  n} \bigvee_{n \choose k} 
				[\B_k(q,G,K)/G]/[F_k^1(\B_k(q,G,K))/G]. $$
\end{thm}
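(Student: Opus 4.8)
The plan is to deduce Theorem~\ref{thm: stable decomposition 1} from the general machinery of stable splittings of filtered $G$-spaces whose filtration quotients are collapsible in the equivariant sense, exactly as set up in the paragraph preceding the statement. First I would recall that the spaces $\B_n(q,G,K)$ carry the increasing filtration by the $G$-subspaces $F_n^i(\B_n(q,G,K))$ of tuples with at least $i$ coordinates equal to $1_G$, with $F_n^{n+1}=\varnothing$ (or a point) at the top, and that the cited work of Adem--Cohen, Adem--Cohen--Gomez and Villarreal shows each pair $(F_n^k,F_n^{k+1})$ is a $G$-NDR pair whose quotient has the homotopy type of a $G$-CW complex, with the explicit identification
$$
F_n^{k}(\B_n(q,G,K))/F_n^{k+1}(\B_n(q,G,K))
\;\simeq\; \bigvee_{\binom{n}{k}} \B_{n-k}(q,G,K)/F_{n-k}^{1}(\B_{n-k}(q,G,K)).
$$
Here the wedge is indexed by the $\binom{n}{k}$ choices of which $k$ coordinates are forced to be $1_G$, and each summand is what remains of the other $n-k$ coordinates after collapsing the locus where any one of those is also the identity.

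The key step is then the standard fact that for a filtered space $A = A_0 \supseteq A_1 \supseteq \cdots \supseteq A_{n+1} = *$ built from NDR pairs, the suspension splits $G$-equivariantly as $\Sigma A \simeq \Sigma \bigvee_{k} A_k/A_{k+1}$; this is the equivariant refinement of the classical stable splitting of filtered spaces, and it holds here because all the relevant pairs are $G$-NDR with $G$-CW filtration quotients, so one may apply it level by level. Applying this to $A = \B_n(q,G,K)$ with $A_k = F_n^k(\B_n(q,G,K))$ and substituting the identification of the filtration quotients above gives
$$
\Sigma\,\B_n(q,G,K) \;\simeq\; \Sigma \bigvee_{1\leqslant k\leqslant n}\ \bigvee_{\binom{n}{k}}\ \B_k(q,G,K)/F_k^1(\B_k(q,G,K)),
$$
after reindexing $n-k \rightsquigarrow k$, which is the first displayed equivalence.

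For the second equivalence I would pass to $G$-orbits. Since the homotopy equivalence is $G$-equivariant and all spaces in sight have the $G$-homotopy type of $G$-CW complexes, applying the (homotopy) orbit space functor $(-)/G$ — more carefully, using that for $G$-NDR inclusions of $G$-CW complexes the strict orbit space agrees with the homotopy orbit-wise quotient and $\Sigma$ commutes with $(-)/G$ up to the obvious basepoint bookkeeping — yields
$$
\Sigma\,(\B_n(q,G,K)/G) \;\simeq\; \Sigma \bigvee_{1\leqslant k\leqslant n}\ \bigvee_{\binom{n}{k}}\ [\B_k(q,G,K)/G]\big/[F_k^1(\B_k(q,G,K))/G],
$$
using that $(X/A)/G \approx (X/G)/(A/G)$ for a closed $G$-invariant subspace $A$. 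The main obstacle, and the only place real care is needed, is justifying the equivariant stable splitting and its compatibility with orbit spaces: one must check that the NDR structures provided by the cited references are genuinely $G$-equivariant (which the references assert — the simplicial spaces $\B_*(q,G,K)$ are simplicially $G$-NDR) and that collapsing and suspending commute with $(-)/G$ in this setting. Everything else is bookkeeping: matching the combinatorial index set $\binom{n}{k}$ and the reindexing of the wedge summands. So the proof in the paper can be as short as citing the preceding discussion, noting the filtration quotient formula, and invoking the equivariant stable splitting theorem together with the observation that it descends to orbit spaces.
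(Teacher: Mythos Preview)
Your proposal is correct and matches the paper's approach: the paper's proof is essentially a citation to Adem--Cohen--Gomez \cite{adem.cohen.gomez} for $q=2$ and to Villarreal \cite{villarreal2016cosimplicial} and Adem--Cohen--Gitler--Bahri--Bendersky \cite{adem.cohen.gitler.bahri.bendersky} for general $q$, relying on exactly the $G$-NDR filtration and the filtration-quotient identification you spell out. One small caution on your phrasing: the ``standard fact'' that a filtered space splits after a \emph{single} suspension does not follow from the pairs being $G$-NDR with $G$-CW quotients alone---in general that only gives a stable splitting---so the one-suspension statement genuinely uses the extra simplicial/projection structure on $\B_\ast(q,G,K)$ (the coordinate-deletion maps furnishing the needed retractions), which is what the cited references exploit.
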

\begin{proof}
The case when $q=2$ follows from \cite[Theorem 1.1]{adem.cohen.gomez}.
The same arguments show that this works for $q>2$, or alternatively by 
\cite[Theorem 1.3]{villarreal2016cosimplicial}
and \cite[Theorem 1.6]{adem.cohen.gitler.bahri.bendersky}
\end{proof}

Note that the first result of this nature was proved by
Adem and Cohen \cite{adem2007commuting}, where they write
the decomposition of the spaces $\Hom(\Z^n,G)$ into wedge
sums after one suspension for any closed subgroup of 
$GL(n,\C)$.

Similarly, the spaces $\X(q,G,K)$ can also be filtered by setting
$$
F_1(\X(q,G,K)) \subset F_2(\X(q,G,K)) \subset
			\cdots \subset F_\infty(\X(q,G,K))=\X(q,G,K),
$$
where for each $k$, we define $F_k(\X(q,G,K))$ as the image of
$$
\X_k(q,G,K) = \left(\bigsqcup_{k \geqslant  n\geqslant  1} \B_n(q,G,K)_1 \right)/\sim
$$
in $\X(q,G,K)$. 
The pairs $(F_{n-k}(\X(q,G,K)),F_{n-k+1}(\X(q,G,K)))$ are also
$G$-NDR-pairs, and the filtration quotients are 
clearly homeomorphic to
$$(F_{n-k}(\X(q,G,K)),F_{n-k+1}(\X(q,G,K)))
		\approx \B_{k}(q,G,K)/(F_{k}^{1}(\B_k(q,G,K)).
$$

Therefore, we obtain the following theorem.

\begin{thm}\label{thm: decomp filtration of J(G)}
Let $G$ be a compact and connected Lie group and $K$ a connected 
central subgroup. Then there is a $G$-equivariant homotopy equivalence
$$\Sigma ( \X(q,G,K)) \simeq \Sigma 
		\bigvee_{k \geqslant  1} 
			\B_k(q,G,K)/F_k^1(\B_k(q,G,K)),$$
and hence a homotopy equivalence
$$\Sigma ( \X(q,G,K)/G) \simeq \Sigma 
		\bigvee_{k \geqslant  1}
		[\B_k(q,G,K)/G]/[F_k^1(\B_k(q,G,K))/G]. $$
\end{thm}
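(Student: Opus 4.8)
The plan is to reduce Theorem~\ref{thm: decomp filtration of J(G)} to the already-established stable splitting for the finite-dimensional pieces $\B_n(q,G,K)$ (Theorem~\ref{thm: stable decomposition 1}) by passing to the colimit. First I would make precise the filtration of $\X(q,G,K)$: the space $\X_k(q,G,K)$ is the image of $\bigsqcup_{1\le n\le k}\B_n(q,G,K)$ under the James identification, and the inclusions $\X_k(q,G,K)\hookrightarrow \X_{k+1}(q,G,K)$ exhibit $\X(q,G,K)$ as a colimit. The key point, which I would verify carefully, is that each pair $(\X_{k+1}(q,G,K),\X_k(q,G,K))$ is a $G$-NDR pair and that the successive filtration quotient is $G$-homeomorphic to $\B_k(q,G,K)/F_k^1(\B_k(q,G,K))$. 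This is essentially bookkeeping: inside $\X(q,G,K)$, a reduced word of length exactly $k$ (i.e.\ with no identity coordinates) in $\B_k(q,G,K)$ survives in $\X_k/\X_{k-1}$, while words of length $<k$ are collapsed, and the James relation identifies such a word with the image of an element of $\B_k(q,G,K)\setminus F_k^1(\B_k(q,G,K))$. The $G$-NDR property is inherited from the simplicial $G$-NDR structure on $\B_\ast(q,G,K)$ established in \cite[\S 4]{adem.cohen.gomez} and the analogous cofibration conditions in \cite{villarreal2016cosimplicial}.

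Second, with the $G$-NDR filtration in hand, the general principle for stable splittings of filtered spaces with cofibration inclusions (as used by Adem--Cohen and in \cite{adem.cohen.gitler.bahri.bendersky}) gives a $G$-equivariant stable equivalence
\[
\Sigma\,\X(q,G,K)\;\simeq\;\Sigma\bigvee_{k\ge 1}\X_k(q,G,K)/\X_{k-1}(q,G,K)\;\simeq\;\Sigma\bigvee_{k\ge 1}\B_k(q,G,K)/F_k^1(\B_k(q,G,K)),
\]
where the first equivalence is the standard splitting of a (stably trivial) filtration quotient tower and the second is the identification of the filtration quotients from the previous paragraph. One must check that the infinite wedge causes no trouble: since the $k$-th wedge summand is $(k-1)$-connected (each identity-coordinate collapse raises connectivity), the colimit is achieved in each dimension after finitely many stages, so the splitting passes to the colimit on both sides. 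Alternatively, one can assemble the finite-dimensional splittings of Theorem~\ref{thm: stable decomposition 1} for $\B_n(q,G,K)$ compatibly and take the colimit over $n$, using that the James filtration of $\X(q,G,K)$ restricts on the length-$\le n$ part to the $F_n^\bullet$ filtration of $\B_n(q,G,K)$.

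Third, the non-equivariant statement about orbit spaces follows by applying the functor $(-)/G$. Here I would invoke that for a compact Lie group $G$ acting on a $G$-CW complex, passing to orbit spaces commutes with suspension and with wedges, and sends a $G$-cofibration to a cofibration; hence $\Sigma(\X(q,G,K)/G)\simeq \Sigma\bigvee_{k\ge 1}[\B_k(q,G,K)/G]/[F_k^1(\B_k(q,G,K))/G]$, using $(\B_k/F_k^1)/G\cong (\B_k/G)/(F_k^1/G)$ since the collapsed subspace is $G$-invariant.

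The main obstacle I anticipate is not any deep topology but rather the careful verification that the James-type filtration of $\X(q,G,K)$ genuinely yields $G$-NDR pairs with the stated filtration quotients, and that the descending-central-series condition $f(\Gamma^q)\subset K$ is preserved under all the relevant face and degeneracy maps so that the finite-dimensional splittings glue $G$-equivariantly across the colimit; once that combinatorial/point-set input is secured, the splitting itself is formal from \cite{adem.cohen.gomez,villarreal2016cosimplicial,adem.cohen.gitler.bahri.bendersky}, exactly as in the proof of Theorem~\ref{thm: stable decomposition 1}.
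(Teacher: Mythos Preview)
Your proposal is correct and follows essentially the same strategy as the paper: identify the James-type filtration of $\X(q,G,K)$, check that successive pairs are $G$-NDR with filtration quotients $\B_k(q,G,K)/F_k^1(\B_k(q,G,K))$, and deduce the stable splitting. The paper's proof is considerably terser---it simply asserts the $G$-NDR property and filtration-quotient identification (already set up before the theorem), invokes the classical James splitting $\Sigma J(G)\simeq \Sigma\bigvee_{k\ge 1}\widehat{G}^k$, and says the splitting for the subspace $\X(q,G,K)\subseteq J(G)$ ``follows by inspection''---so your version is effectively an expanded account of the same argument, with the extra care about connectivity and colimits that the paper omits.
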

\begin{proof}
The pairs $(F_{n-k}(\X(q,G,K)),F_{n-k+1}(\X(q,G,K)))$ are 
$G$-NDR-pairs with filtration quotient given by
$\B_k(q,G,K)/F_k^{1}$ as above. The James reduced product
$J(G)$ splits as a wedge sum after one suspension
$$
\Sigma J(G) \simeq \bigvee_{k\geqslant  1} \widehat{G}^k
$$
and the splitting for the suspension of $\X(q,G,K)\subseteq J(G)$ 
then follows by inspection.
\end{proof}

In particular, we obtain the following useful theorem.

\begin{thm}\label{thm: stable decomposition 2}
Let $G$ be a compact and connected Lie group. 
Then there is a homotopy equivalence
$$\Sigma ( \X(q,G)/G) \simeq \Sigma 
		\bigvee_{n \geqslant  1}
		\widehat{\Hom}(F_n/\Gamma^q,G)/G, $$
and in particular
$$\Sigma ( \Comm(G)/G) \simeq \Sigma 
		\bigvee_{n \geqslant  1}
				\widehat{\Hom}(\Z^n,G)/G. $$
\end{thm}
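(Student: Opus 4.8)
The plan is to deduce Theorem~\ref{thm: stable decomposition 2} as the special case $K=1_G$ of Theorem~\ref{thm: decomp filtration of J(G)}. First I would recall that $\B_n(q,G,1_G)=\Hom(F_n/\Gamma^q,G)$ by definition, and correspondingly $\X(q,G,1_G)=\X(q,G)$ and $\Comm(G)=\X(2,G)$. The subgroup $K=\{1_G\}$ is trivially closed, central, and connected, so Theorem~\ref{thm: decomp filtration of J(G)} applies verbatim and gives a homotopy equivalence
\begin{equation*}
\Sigma(\X(q,G)/G)\simeq \Sigma\bigvee_{k\geqslant 1}[\Hom(F_n/\Gamma^q,G)/G]/[F_k^1(\Hom(F_k/\Gamma^q,G))/G].
\end{equation*}

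Next I would identify the filtration quotients with the $\widehat{\phantom{X}}$-construction. By definition $F_k^1(\Hom(F_k/\Gamma^q,G))$ is the subspace of $k$-tuples with at least one coordinate equal to $1_G$, so $\Hom(F_k/\Gamma^q,G)/F_k^1(\Hom(F_k/\Gamma^q,G))$ is precisely $\widehat{\Hom}(F_k/\Gamma^q,G)$ as introduced in the statement of Theorem~\ref{thm: decomposition of X(q,G) INTRO }. Passing to $G$-orbits, the subspace $F_k^1/G$ sits inside $\Hom(F_k/\Gamma^q,G)/G$ and the quotient is $\widehat{\Hom}(F_k/\Gamma^q,G)/G$; here one should note that the basepoint of the smash-type quotient (the image of the tuples with a trivial coordinate) is a single point whether one collapses before or after taking the $G$-quotient, since conjugation fixes $1_G$ in each such coordinate. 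Reindexing the wedge by $n$ then yields the first displayed equivalence, and setting $q=2$ together with $F_n/\Gamma^2\cong\Z^n$ gives the statement about $\Comm(G)/G$.

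The only genuinely delicate point, and the one I would treat with the most care, is the interchange of the suspension/stable-splitting with passage to the orbit space $G\backslash(-)$ and with the collapse of $F_k^1$. Theorem~\ref{thm: decomp filtration of J(G)} already provides a $G$-equivariant splitting of $\Sigma\X(q,G,K)$ and then its non-equivariant consequence after dividing by $G$; what must be checked is that the filtration quotients of $\X(q,G)/G$ by the images $F_k(\X(q,G))/G$ are the orbit spaces of the equivariant filtration quotients, i.e.\ that $[F_{k}(\X(q,G))/G]/[F_{k-1}(\X(q,G))/G]\approx [F_k(\X(q,G))/F_{k-1}(\X(q,G))]/G$. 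This follows because the pairs $(F_{k},F_{k-1})$ are $G$-NDR pairs (as recorded just before Theorem~\ref{thm: decomp filtration of J(G)}), so the cofiber sequences are preserved on passing to orbits, and the equivariant homotopy equivalences realizing the splitting descend to honest homotopy equivalences on quotients. I would therefore phrase the proof as: ``Apply Theorem~\ref{thm: decomp filtration of J(G)} with $K=1_G$, identify $\B_k(q,G,1_G)=\Hom(F_k/\Gamma^q,G)$ and $\B_k(q,G,1_G)/F_k^1=\widehat{\Hom}(F_k/\Gamma^q,G)$, and specialize to $q=2$," with a sentence noting that the $G$-NDR property guarantees the orbit-space quotients agree with the quotients of orbit spaces.
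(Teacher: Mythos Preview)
Your proposal is correct and takes essentially the same approach as the paper: the paper presents this theorem without a separate proof, simply introducing it with ``In particular, we obtain the following useful theorem'' immediately after Theorem~\ref{thm: decomp filtration of J(G)}, so it is exactly the specialization $K=1_G$ you describe. Your additional care about the identification $\B_k(q,G,1_G)/F_k^1=\widehat{\Hom}(F_k/\Gamma^q,G)$ and the compatibility of orbit-space formation with the filtration quotients via the $G$-NDR property is accurate and more detailed than what the paper records; note only the minor indexing typo in your displayed equation, where $\Hom(F_n/\Gamma^q,G)$ should read $\Hom(F_k/\Gamma^q,G)$.
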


Note that the first decomposition holds also for the 
component of the trivial representation, as observed
also in \cite{ramrasstafa}. The conjugation action of $G$ 
preserves the component $\X(q,G)_1$, giving $G$-equivariant
decompositions of the suspension of $\X(q,G)_1$. 
In particular, we are interested in the following special case.

\begin{thm}\label{thm: decomp X(q,G)/G}
Let $G$ be a compact and connected Lie group.
Then there is a homotopy equivalence
$$\Sigma ( \X(q,G)_1/G) \simeq \Sigma 
		\bigvee_{n \geqslant  1} \widehat{\Hom}(F_n/\Gamma^q,G)_1/G. $$
\end{thm}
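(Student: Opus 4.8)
The plan is to deduce Theorem~\ref{thm: decomp X(q,G)/G} from the already-established stable decomposition in Theorem~\ref{thm: decomp filtration of J(G)} (equivalently Theorem~\ref{thm: stable decomposition 2}) by restricting every step of the argument to the path component of the trivial representation. First I would observe that the conjugation action of $G$ on $\X(q,G) = \X(q,G,1)$ carries the path component $\X(q,G)_1$ to itself, since conjugation by a path-connected group cannot move points between path components; hence $\X(q,G)_1$ is a $G$-invariant subspace and the filtration $F_k(\X(q,G))$ restricts to a $G$-invariant filtration $F_k(\X(q,G)_1) := F_k(\X(q,G)) \cap \X(q,G)_1$. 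Because $\X(q,G)_1$ is by definition assembled (via the James relation $\sim$) from the components $\B_n(q,G)_1 = \Hom(F_n/\Gamma^q,G)_1$, one checks that $F_k(\X(q,G)_1)$ is precisely the image of $\bigl(\bigsqcup_{1 \le n \le k} \B_n(q,G)_1\bigr)/\!\!\sim$ in $\X(q,G)_1$, matching the definition already given in the excerpt.

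Next I would verify that the pairs $\bigl(F_{k}(\X(q,G)_1), F_{k-1}(\X(q,G)_1)\bigr)$ remain $G$-NDR pairs and that the filtration quotients are still the expected smashes. Both facts are inherited: the $G$-NDR structure on $F_k(\X(q,G))$ relative to $F_{k-1}(\X(q,G))$ was built from the simplicially-$G$-NDR structure of $\B_\ast(q,G,K)$ (work of Adem--Cohen--Gomez and Villarreal cited in Section~\ref{sec: G-equivar. decomp.}), and restricting an $H$-NDR pair to an open-and-closed (here, path-component) invariant subspace yields an $H$-NDR pair again. For the filtration quotient, the homeomorphism
$$
F_{k}(\X(q,G))/F_{k-1}(\X(q,G)) \;\approx\; \B_{k}(q,G)/F_k^1(\B_k(q,G))
$$
restricts on the trivial component to
$$
F_{k}(\X(q,G)_1)/F_{k-1}(\X(q,G)_1) \;\approx\; \B_{k}(q,G)_1/\bigl(F_k^1(\B_k(q,G)) \cap \B_k(q,G)_1\bigr) \;=\; \widehat{\Hom}(F_k/\Gamma^k,G)_1
$$
— here one uses that an $n$-tuple in $\B_k(q,G)$ with some coordinate equal to $1_G$ lies in the trivial component iff the whole tuple does, so the subspace being collapsed is exactly the degenerate locus inside the trivial component. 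Wait — more carefully, I should write $\widehat{\Hom}(F_k/\Gamma^q,G)_1$, with the fixed $q$, not $\Gamma^k$. Then the $G$-equivariant stable splitting of a filtered space with $G$-NDR filtration quotients (the same formal argument proving Theorem~\ref{thm: decomp filtration of J(G)}, using that $\Sigma J(G) \simeq \bigvee_k \widehat{G}^k$ splits and $\X(q,G)_1 \subseteq J(G)$ is a subspace compatible with the splitting) gives $\Sigma(\X(q,G)_1) \simeq \Sigma \bigvee_{n\ge 1} \widehat{\Hom}(F_n/\Gamma^q,G)_1$ $G$-equivariantly.

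Finally I would pass to orbit spaces: applying $(-)/G$ to a $G$-equivariant homotopy equivalence of suspensions (where the suspension coordinate carries the trivial $G$-action, so $\Sigma(X)/G \approx \Sigma(X/G)$) yields the non-equivariant statement
$$
\Sigma(\X(q,G)_1/G) \;\simeq\; \Sigma \bigvee_{n \ge 1} \widehat{\Hom}(F_n/\Gamma^q,G)_1/G,
$$
which is the claim. The main obstacle I anticipate is the bookkeeping in the middle step: confirming that restricting the NDR/cofibration data to the trivial path component does not break the structure maps, and that the degenerate subspace $F_k^1(\B_k(q,G))$ intersected with the trivial component is precisely the subspace whose quotient gives $\widehat{\Hom}(F_k/\Gamma^q,G)_1$. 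This is essentially routine once one notes that $\X(q,G)_1$ is open and closed in $\X(q,G)$, so all the point-set topology (NDR pairs, the James splitting by inspection) localizes cleanly to it; indeed this is exactly the observation attributed to Ramras--Stafa~\cite{ramrasstafa} in the remark preceding the theorem, and the proof amounts to spelling it out.
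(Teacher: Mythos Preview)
Your proposal is correct and follows the same approach as the paper: restrict the $G$-equivariant splitting of Theorem~\ref{thm: decomp filtration of J(G)} to the path component of the trivial representation (using that conjugation preserves this component), then pass to $G$-orbits. The paper treats this as an immediate consequence of the preceding results and the remark citing~\cite{ramrasstafa}, without spelling out the NDR and filtration-quotient checks you provide, so your write-up is simply a more detailed version of the same argument.
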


\section{The spaces $\Comm(G)_1/G$ and $\Rep(\Z^n,G)_1$}\label{sec: stable decomp of Rep and Comm mod G}

We start with the following theorem, which was mentioned in 
\cite[Remark 4, p. 746]{bairdcohomology} and in 
\cite[Theorem 1.3]{adem.cohen.gomez}.
We give a sketch of the proof here.

\begin{thm}\label{thm: decomposition rep}
Let $G$ be a compact and connected Lie group with maximal torus $T$ and
Weyl group $W$.  Then there is a homeomorphism 
$$\Rep(\Z^n,G)_1 \approx T^n/W $$ 
and a homotopy equivalence
$$\Sigma ( \Rep(\Z^n,G)_1) \simeq \Sigma 
		\bigvee_{1 \leqslant  k \leqslant  n} \bigvee_{n \choose k} 
				\widehat{T}^k/W, $$
where $W$ acts diagonally on $T^n$ and $\widehat{T}^k$.
\end{thm}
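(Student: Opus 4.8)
The plan is to prove the homeomorphism $\Rep(\Z^n,G)_1 \approx T^n/W$ first, and then deduce the stable splitting from it using Theorem~\ref{thm: stable decomposition 1} (the $G$-equivariant stable decomposition of $\B_n(q,G,K)$ with $q=2$, $K=1_G$, which gives the splitting of $\Hom(\Z^n,G)/G$ into wedge summands built from $\widehat{\Hom}(\Z^k,G)_1/G$). First I would recall the classical fact that the path component $\Hom(\Z^n,G)_1$ of the trivial representation consists exactly of those commuting $n$-tuples that can be simultaneously conjugated into the maximal torus $T$; equivalently, the multiplication map $G \times T^n \to \Hom(\Z^n,G)_1$, $(g,\underline{t}) \mapsto g\underline{t}g^{-1}$, is surjective. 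This is a theorem going back to work on commuting elements in compact Lie groups (it holds because $G$ is connected, so the identity component of the centralizer of any torus element is a connected reductive group containing a maximal torus). Passing to orbit spaces, this induces a continuous surjection $T^n/W \to \Hom(\Z^n,G)_1/G = \Rep(\Z^n,G)_1$, where the $W$-action on $T^n$ is the diagonal one (two tuples in $T^n$ are $G$-conjugate iff they are $W$-conjugate, again a standard consequence of the conjugacy of maximal tori).

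The key step is then to check that this surjection $T^n/W \to \Rep(\Z^n,G)_1$ is a bijection and a homeomorphism. Injectivity is precisely the statement that $G$-conjugacy of tuples in $T^n$ coincides with $W$-conjugacy; for the full $G$ this requires the fact that if $\underline{t}, \underline{t}' \in T^n$ satisfy $g\underline{t}g^{-1} = \underline{t}'$ for some $g \in G$, then there is $w \in W$ (i.e. an element of $N_G(T)$) with $w\underline{t}w^{-1} = \underline{t}'$ — this follows by applying the standard argument to the (connected) centralizer of the subgroup generated by the coordinates, which contains both $T$ and $gTg^{-1}$ as maximal tori. For the homeomorphism, since $G$ is compact both $T^n/W$ and $\Rep(\Z^n,G)_1$ are compact Hausdorff, so a continuous bijection between them is automatically a homeomorphism; alternatively one observes that $\Rep(\Z^n,G)_1$ carries the quotient topology from $\Hom(\Z^n,G)_1$, the surjection $G \times T^n \to \Hom(\Z^n,G)_1$ is a quotient map (being a closed surjection of compact Hausdorff spaces), and the induced map on $W$-orbits inherits the quotient topology, matching the quotient topology on $T^n/W$.

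For the stable splitting, I would apply Theorem~\ref{thm: stable decomposition 1} with $q=2$, $K=1_G$, which gives
$$\Sigma(\Hom(\Z^n,G)/G) \simeq \Sigma \bigvee_{1 \leqslant k \leqslant n} \bigvee_{n \choose k} [\Hom(\Z^k,G)/G]/[F_k^1(\Hom(\Z^k,G))/G],$$
and then restrict to the component of the trivial representation, which is preserved by all the face and degeneracy maps and hence by the decomposition. Under the homeomorphism just established, $\Hom(\Z^k,G)_1/G \approx T^k/W$, and the subspace $F_k^1$ of tuples with at least one coordinate equal to $1_G$ corresponds to the subspace of $T^k/W$ of classes of tuples with a coordinate equal to $1_T$ (here one uses that the $W$-action is diagonal, so ``some coordinate is the identity'' is a $W$-invariant condition); the quotient $[T^k/W]/[\text{that subspace}]$ is by definition $\widehat{T}^k/W$, the $W$-quotient of the $k$-fold smash product of $T$ pointed at $1_T$, since $W$ acts on $\widehat{T}^k$ diagonally and smashing commutes with taking the $W$-quotient in this setting. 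Assembling these identifications gives exactly
$$\Sigma(\Rep(\Z^n,G)_1) \simeq \Sigma \bigvee_{1 \leqslant k \leqslant n} \bigvee_{n \choose k} \widehat{T}^k/W.$$

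\textbf{Main obstacle.} The genuinely nontrivial input is the identification $\Hom(\Z^n,G)_1/G \approx T^n/W$, i.e. that every commuting $n$-tuple in the identity component can be simultaneously conjugated into $T$ and that $G$-conjugacy reduces to $W$-conjugacy; I expect to cite the literature (e.g. Baird, or Adem--Cohen--Gómez, as flagged before the statement) rather than reprove it. The purely topological bookkeeping — matching quotient topologies, tracking the basepoint subspace $F_k^1$ through the homeomorphism, and identifying the smash-quotient $\widehat{T}^k/W$ — is routine but must be done carefully because the $W$-action and the smashing/quotient operations have to be interchanged, and one should make sure the NDR/cofibration hypotheses needed for the wedge splitting are inherited by the component of the trivial representation.
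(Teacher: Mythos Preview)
Your proposal is correct and follows essentially the same route as the paper: establish the surjection $G\times T^n\to \Hom(\Z^n,G)_1$ (the paper cites Baird, and factors through $G\times_{NT}T^n$), pass to orbits to get $T^n/W\to\Rep(\Z^n,G)_1$, argue injectivity (the paper cites Sikora rather than sketching the centralizer argument), and then read off the stable splitting from Theorem~\ref{thm: stable decomposition 1} via the identification $\widehat{T}^k/W\approx \widehat{\Hom}(\Z^k,G)_1/G$. The only cosmetic differences are that you make the compact-Hausdorff bijection argument for the homeomorphism explicit and spell out the $F_k^1$ bookkeeping, whereas the paper leaves both implicit.
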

\begin{proof}
There is a map 
\begin{align*}
G\times T^n & \to \Hom(\Z^n,G)_1 \\
(g,t_1,\dots,t_n) &\mapsto (gt_1 g^{-1},\dots,gt_n g^{-1}),
\end{align*}
which is a surjection by Baird \cite{bairdcohomology}. This map factors through
the quotient by the normalizer of $T$
$$
\phi_n \colon G \times_{NT} T^n \to \Hom(\Z^n,G)_1,
$$
where $NT$ acts by left multiplication on $G$ and diagonally by conjugation on $T^n$.
This map is $G$-equivariant, where $G$ acts by left multiplication on $G$, trivially on
$T^n$ and by conjugation on $\Hom(\Z^n,G)_1$ \cite{adem.cohen.gomez}. Therefore, we 
get a commutative diagram
\begin{center}
\begin{tikzcd}
G \times_{NT} T^n \arrow{dd} \arrow{dr} \arrow{rr} 
		&	& \Hom(\Z^n,G)_1 \arrow{dd}	\\
		& T^n/W \arrow[dr,dotted] 	& 	\\
(G \times_{NT} T^n)/G \arrow{rr} \arrow[ur,dotted] & & \Rep(\Z^n,G)_1.
\end{tikzcd}
\end{center}
Injectivity of the map $T^n/W \to \Rep(\Z^n,G)_1$ follows by 
Sikora~\cite{sikora2014character}. This gives the  homeomorphism 
$T^n/W \approx \Rep(\Z^n,G)_1$. Moreover, we have 
$$\widehat{T}^n/W \approx \Rep(\Z^n,G)_1/(F_n^1(\Hom(\Z^n,G)_1)/G).$$
\end{proof}

The following theorem is the analogue of the above theorem, 
for the subspace $\Comm(G)\subseteq J(G)$.

\begin{thm}\label{thm: Comm(G)/G = J(T)/W }
Let $G$ be a compact and connected Lie group with maximal 
torus $T$ and Weyl group $W$. Then there is a homeomorphism 
$$
\Comm(G)_1/G \approx J(T)/W
$$
and a  homotopy equivalence
$$\Sigma ( \Comm(G)_1/G) \simeq \bigvee_{n \geqslant  1} \widehat{T}^{n}/W.$$
\end{thm}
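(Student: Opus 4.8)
The plan is to deduce Theorem~\ref{thm: Comm(G)/G = J(T)/W } from Theorem~\ref{thm: decomposition rep} by assembling the homeomorphisms $\Rep(\Z^n,G)_1 \approx T^n/W$ as $n$ varies into a homeomorphism of James-type reduced products. First I would recall that $\Comm(G)_1/G = \X(2,G)_1/G = \bigl(\bigsqcup_{n\geqslant 1}\Hom(\Z^n,G)_1\bigr)/{\sim}$, where $\sim$ omits coordinates equal to $1_G$ and the quotient is taken with respect to the (commuting) diagonal $G$-conjugation action; since the $\sim$-relation and the conjugation action commute, we may first quotient by $G$ levelwise and then apply the James relation, so that $\Comm(G)_1/G \approx \bigl(\bigsqcup_{n\geqslant 1}\Rep(\Z^n,G)_1\bigr)/{\sim}$. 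On the other side, $J(T)/W = \bigl(\bigsqcup_{n\geqslant 0}T^n\bigr)/{\sim}$ modulo the diagonal $W$-action, and since the diagonal $W$-action commutes with deleting basepoint coordinates, $J(T)/W \approx \bigl(\bigsqcup_{n\geqslant 1}T^n/W\bigr)/{\sim}$.

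Next I would check that the levelwise homeomorphisms $\psi_n\colon T^n/W \xrightarrow{\ \approx\ } \Rep(\Z^n,G)_1$ from Theorem~\ref{thm: decomposition rep} are compatible with the two face maps used to define the James relation, namely the inclusions $T^{n-1}\hookrightarrow T^n$ (and $\Hom(\Z^{n-1},G)_1\hookrightarrow\Hom(\Z^n,G)_1$) obtained by inserting $1_G$ in the $i$-th slot. Concretely, the map $G\times T^n \to \Hom(\Z^n,G)_1$ of Theorem~\ref{thm: decomposition rep} is visibly natural with respect to these slot-insertions (inserting $1_G=g1_Gg^{-1}$ commutes with conjugation), hence so are the induced maps $G\times_{NT}T^n \to \Hom(\Z^n,G)_1$ and, after passing to $G$-orbits, the $\psi_n$. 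Therefore the $\psi_n$ descend to a well-defined continuous bijection $\psi\colon J(T)/W \to \Comm(G)_1/G$ on the identified colimits. Since each $\Hom(\Z^n,G)_1$ and $T^n$ is compact and the quotient maps are closed, a standard colimit argument (both sides carry the weak topology with respect to the finite filtration stages $F_k$, and $\psi$ restricts to a homeomorphism on each compact filtration stage $F_k(\Comm(G)_1/G) \approx \bigl(\bigsqcup_{k\geqslant n\geqslant 1}T^n\bigr)/{\sim}$ modulo $W$) shows $\psi$ is a homeomorphism. This is the first assertion.

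For the stable splitting, I would invoke Theorem~\ref{thm: decomp filtration of J(G)} (or its specialization discussed after Theorem~\ref{thm: stable decomposition 2}, valid on the component of the trivial representation) applied with $K=1_G$ and $q=2$, which gives
$$
\Sigma(\Comm(G)_1/G) \simeq \Sigma\bigvee_{n\geqslant 1} \widehat{\Hom}(\Z^n,G)_1/G.
$$
Then the last sentence of the proof of Theorem~\ref{thm: decomposition rep} identifies each summand: $\widehat{\Hom}(\Z^n,G)_1/G = \Rep(\Z^n,G)_1/\bigl(F_n^1(\Hom(\Z^n,G)_1)/G\bigr) \approx \widehat{T}^n/W$. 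Splicing these identifications into the wedge yields $\Sigma(\Comm(G)_1/G) \simeq \Sigma\bigvee_{n\geqslant 1}\widehat{T}^n/W$, as claimed. (Note the displayed statement in the theorem omits the outer $\Sigma$ on the right-hand side; I would either write it as $\Sigma\bigvee_{n\geqslant 1}\widehat{T}^n/W$ to match the intro version Theorem~\ref{thm: Comm(G)/G = J(T)/W INTRO}, or observe that $\bigvee_n \widehat T^n/W$ already carries the reduced-suspension structure coming from $\Sigma J(T) \simeq \bigvee_k \widehat T^k$.)

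The main obstacle is the topological point in the first part: verifying that the bijection $\psi$ assembled from the $\psi_n$ is actually a homeomorphism rather than merely a continuous bijection. The subtlety is that $J(T)$ and $J(G)$ carry the colimit (weak) topology, and one must confirm that taking $W$-orbits, respectively $G$-orbits, commutes with the colimit — i.e. that $\Comm(G)_1/G$ and $J(T)/W$ both genuinely have the weak topology with respect to their filtrations $F_k$, and that $\psi$ is a closed map on each stage. This follows from compactness of $G$ and $T$ together with the fact (recorded in Section~\ref{sec: G-equivar. decomp.}) that the filtration pairs are $G$-NDR, hence well-behaved under passage to orbit spaces; but it deserves to be stated carefully rather than asserted. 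Everything else — naturality of the Baird map under slot-insertion, compatibility with the James relation, and the identification of filtration quotients with $\widehat T^n/W$ — is routine given Theorem~\ref{thm: decomposition rep}.
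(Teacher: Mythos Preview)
Your argument is correct and is essentially the paper's own proof: the paper assembles the maps $\phi_n\colon G\times_{NT}T^n\to\Hom(\Z^n,G)_1$ into a single $G$-equivariant map $\Phi\colon G\times_{NT}J(T)\to\Comm(G)_1$ and then passes to $G$-orbits, whereas you pass to $G$-orbits first and then assemble the $\psi_n$---the same idea in reversed order, and you are more careful than the paper about the colimit topology. For the splitting the paper goes directly through $\Sigma(J(T)/W)\simeq\Sigma\bigvee_{n\geqslant 1}\widehat T^{\,n}/W$ (the James splitting descended to $W$-orbits), while you instead route through Theorem~\ref{thm: decomp X(q,G)/G} and then identify each summand via Theorem~\ref{thm: decomposition rep}; this is a minor variation and equally valid.
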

\begin{proof}
The maps 
$$
\phi_n \colon G \times_{NT} T^n \to \Hom(\Z^n,G)_1,
$$
in the proof of Theorem \ref{thm: decomposition rep} combine to give a map
$$
\Phi   \colon G \times_{NT} J(T) \to \Comm(G)_1.
$$
The map $\Phi$ is $G$-equivariant, where $G$ acts by left multiplication on 
the left on the first factor and by conjugation on $\Comm(G)_1.$
Therefore, we get a homeomorphism 
$$
\Comm(G)_1/G \approx J(T)/W.
$$
The decomposition follows easily from the following 
$$\Sigma ( \Comm(G)_1/G)\simeq \Sigma (J(T)/W)
		\simeq \Sigma \bigvee_{n \geqslant  1} \widehat{T}^{n}/W .
 $$
\end{proof}

\section{{Poincar\'e series of $\Comm(G)_1/G$ and $\Rep(\Z^n,G)_1$}}
\label{sec: Poincare series of Rep(Z^n,G)}

The \textit{Poincar\'e series} of a space $X$ is the series
$$
P(X;t):=\sum_{k\geqslant  0} \rk_\Q (H_k(X;\Q)) t^k.
$$
In this section we describe the Poincar\'e series of 
$\Comm(G)_1/G$ and $\Rep(\Z^n,G)_1$. Using this and results by
others we obtain Poincar\'e series for their nilpotent versions.
We refine $P(X;t)$ to have a bi-graded 
\textit{Hilbert--Poincar\'e series} as follows.

First note that by Bott--Samelson \cite{bott1953pontryagin} the 
homology of the James reduced product $J(T)$,
where $T$ is the maximal torus of $G$, is the tensor algebra $\T[\widetilde{H}_*(T)]$ 
generated  by the reduced homology of $T$. This tensor algebra has a bigrading
$$\T[\widetilde{H}_*(T)] = \bigoplus_{i,j} \T[\widetilde{H}_*(T)]_{i,j},$$
where $\T[\widetilde{H}_*(T)]_{i,j}$ is generated by  
$j$--fold tensors of total (co)homological degree $i$.
The action of the Weyl group $W$ preserves this bigrading.
Note that the cohomology of $J(T)$ is the dual $\T^*[\widetilde{H}_*(T)]$ 
of the tensor algebra, which is isomorphic to the tensor algebra since the
homology of $T$ is torsion free. Below we find the bigraded version of the
Poincar\'e series of the $W$-invariant subalgebra
$$
P(\T[\widetilde{H}_*(T)]^W;s,t)=\sum_{i,j} \rk_\Q (\T[\widetilde{H}_*(T)]_{i,j}^W) s^i t^j,
$$
which we call a \textit{Hilbert--Poincar\'e series} 
because of the cohomology and tensor gradings.
To recover the ordinary Poincar\'e series we set $t=1$, 
since the tensor degree does not contribute to the cohomological degree.
We begin with a proposition.

\begin{prop}\label{prop: bigrading for tensor algebra}
Let $G$ be a compact and connected Lie group with maximal 
torus $T$ and Weyl group $W$. Then
$$P(\Comm(G)_1/G;s) \cong P(J(T)/W;s) \cong P(\T[\widetilde{H}_*(T)]^W;s).$$
\end{prop}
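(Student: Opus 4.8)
The plan is to reduce the statement to a computation in rational homology. The first isomorphism $P(\Comm(G)_1/G;s) \cong P(J(T)/W;s)$ is immediate from Theorem~\ref{thm: Comm(G)/G = J(T)/W }, which provides a homeomorphism $\Comm(G)_1/G \approx J(T)/W$; homeomorphic spaces have equal Poincar\'e series. So the real content is the equality $P(J(T)/W;s) = P(\T[\widetilde H_*(T)]^W;s)$.

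First I would identify the homology of the quotient with an invariant subspace. The Weyl group $W = N(T)/T$ acts on $T$ by conjugation and hence, by functoriality of the James construction, on $J(T)$; one may take this action to be cellular. Since $W$ is finite and we use $\Q$-coefficients, the transfer homomorphism gives $H_*(J(T)/W;\Q) \cong H_*(J(T);\Q)_W$, and because $|W|$ is invertible in $\Q$ the coinvariants agree with the invariants $H_*(J(T);\Q)^W$. Next I would invoke Bott--Samelson~\cite{bott1953pontryagin} to identify $H_*(J(T);\Q)$ with the tensor algebra $\T[\widetilde H_*(T)]$ on the reduced homology of $T$, observing that this identification is natural in $T$ (it is built from the inclusions $T^n \hookrightarrow J(T)$ together with the Pontryagin product), so it is $W$-equivariant and transports the conjugation action of $W$ on $J(T)$ to the action on $\T[\widetilde H_*(T)]$ induced by the corresponding automorphisms of $T$. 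That action respects both the homological grading and the tensor-length grading, so $H_k(J(T);\Q)^W \cong (\T[\widetilde H_*(T)]^W)_k$ for every $k$. Since $\widetilde H_*(T)$ is concentrated in positive degrees, each of these graded pieces is finite-dimensional, so comparing ranks against $s^k$ yields the desired equality of series (the bigraded series of the introduction specializes to it by setting the tensor variable equal to $1$).

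The step I expect to require the most care is the $W$-equivariance of the Bott--Samelson isomorphism together with its compatibility with the bigrading: one must check that the algebra isomorphism $H_*(J(T);\Q) \cong \T[\widetilde H_*(T)]$ can be chosen so that an element of $N(T)$, acting by conjugation on $T$ and hence on $J(T)$, corresponds to the evident action on the tensor algebra generated by $\widetilde H_*(T)$, preserving the splitting into $j$-fold tensors of fixed total degree. This is a naturality statement about the James construction and the Pontryagin product under the self-maps of $T$ coming from $W$, and is worth spelling out carefully. The remaining ingredients --- the rational transfer isomorphism and Theorem~\ref{thm: Comm(G)/G = J(T)/W } --- are standard or already in hand.
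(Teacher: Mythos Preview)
Your proposal is correct and follows essentially the same route as the paper: the first equality is the homeomorphism of Theorem~\ref{thm: Comm(G)/G = J(T)/W }, and the second combines Bott--Samelson with the identification $H_*(J(T)/W;\Q)\cong H_*(J(T);\Q)^W$. The paper simply cites Grothendieck~\cite{grothendieck1957} for the latter isomorphism rather than running the transfer argument explicitly, and it does not pause over the $W$-equivariance of the Bott--Samelson isomorphism, which you rightly flag as the point deserving a sentence of justification.
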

\begin{proof}
The first isomorphism follows from the Theorem~\ref{thm: Comm(G)/G = J(T)/W }.
The second homomorphism follows from the fact that 
$$ H_\ast(J(T);\Q) \cong \T[\widetilde{H}_*(T)],$$
and a result of Grothendieck \cite{grothendieck1957} that if $\Gamma$ is a finite group
acting (not necessarily freely) on a $CW$ complex $X$, then 
$H^\ast(X/W;\F)\cong H^\ast(X;\F)^W,$ where $\rm char(\F)=0$ or relatively 
prime to $|\Gamma|$. 
\end{proof}

First we determine the Hilbert-Poincar\'e series $P(\Comm(G)_1/G;s,t),$
where the bigrading comes from Proposition \ref{prop: bigrading for tensor algebra}.

\begin{thm}\label{thm: Poincare series of Comm(G)/G}
Let $G$ be a compact and connected Lie group with maximal 
torus $T$ and Weyl group $W$. Then the Hilbert-Poincar\'e 
series of $\Comm(G)_1/G$ is given by
\begin{align*}
\ds P(\Comm(G)_1/G;s,t)=\frac{1}{|W|} \sum_{w\in W} 
					\left(\sum_{k\geqslant  0} (\det(1+sw)-1)^k t^k\right).
\end{align*}
\end{thm}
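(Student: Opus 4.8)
The plan is to compute the Hilbert--Poincar\'e series of the $W$-invariant subalgebra of the tensor algebra $\T[\widetilde{H}_*(T)]$ via a Molien-type argument, using Proposition~\ref{prop: bigrading for tensor algebra} to identify $P(\Comm(G)_1/G;s,t)$ with $P(\T[\widetilde{H}_*(T)]^W;s,t)$. The starting observation is that since $T$ is a torus of rank $r$, the reduced homology $\widetilde{H}_*(T;\Q)$ is the reduced homology of $(S^1)^r$, which as a graded $W$-representation is the exterior algebra $\Lambda^{\geq 1}(V)$ on a single copy $V = H_1(T;\Q) \cong \Q^r$ placed in degree $1$; the $W$-action is the standard one coming from the action on the maximal torus. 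In particular the graded character of $\widetilde{H}_*(T;\Q)$, evaluated with a variable $s$ tracking homological degree, is $\det(1+sw) - 1$ for $w \in W$ acting on $V$ (the $-1$ removes the degree-zero piece $\Lambda^0 V$).

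Next I would assemble the tensor algebra. Writing $M = \widetilde{H}_*(T;\Q)$ as a bigraded $W$-module (homological degree $i$, tensor degree $j=1$ on $M$ itself), the tensor algebra is $\T[M] = \bigoplus_{k\geq 0} M^{\otimes k}$, with $M^{\otimes k}$ sitting in tensor degree $k$. The bigraded character of $M^{\otimes k}$ at a fixed $w\in W$ is $(\det(1+sw)-1)^k$, and the tensor-degree variable $t$ contributes a factor $t^k$. Now I apply the averaging formula for invariants: for a finite group $W$ with $\mathrm{char}(\F)=0$ or coprime to $|W|$, the dimension of the invariants of a $W$-module is $|W|^{-1}\sum_{w} \operatorname{tr}(w)$, and this is compatible with the bigrading since the $W$-action preserves it. Summing over all tensor degrees $k\geq 0$ gives
\begin{align*}
P(\T[M]^W;s,t) = \frac{1}{|W|}\sum_{w\in W}\sum_{k\geq 0}(\det(1+sw)-1)^k\, t^k,
\end{align*}
which is exactly the claimed formula. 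The identification of the left-hand side with $P(\Comm(G)_1/G;s,t)$ is Proposition~\ref{prop: bigrading for tensor algebra} (whose proof already invokes Grothendieck's theorem on $H^*(X/W;\F)\cong H^*(X;\F)^W$), together with the homeomorphism $\Comm(G)_1/G \approx J(T)/W$ of Theorem~\ref{thm: Comm(G)/G = J(T)/W } and Bott--Samelson's computation $H_*(J(T);\Q)\cong \T[\widetilde{H}_*(T;\Q)]$; I would just need to note that the Bott--Samelson isomorphism is $W$-equivariant and respects the tensor bigrading, which is clear from the naturality of the construction.

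The only real point requiring care --- the "main obstacle," such as it is --- is bookkeeping with the two gradings: one must check that the tensor grading on $\T[\widetilde{H}_*(T)]$ is genuinely $W$-stable (so that taking $W$-invariants commutes with the tensor-degree decomposition) and that the convergence of $\sum_k (\det(1+sw)-1)^k t^k$ as a formal power series is unproblematic, i.e. that for each fixed total cohomological degree $i$ only finitely many $k$ contribute. The latter holds because $\det(1+sw)-1$ has zero constant term in $s$, so $(\det(1+sw)-1)^k$ is divisible by $s^k$ and only $k\leq i$ contribute to the $s^i$-coefficient; this makes the interchange of the sums over $w$, over $k$, and the extraction of graded pieces entirely formal. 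Everything else is a direct application of results already in hand. Setting $t=1$ recovers the plain Poincar\'e series, consistent with the statement of Theorem~\ref{thm: Poincare series of X(q,G) INTRO}.
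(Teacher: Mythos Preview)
Your proposal is correct and follows essentially the same approach as the paper: both reduce via Proposition~\ref{prop: bigrading for tensor algebra} to computing $P(\T[\widetilde{H}_*(T)]^W;s,t)$, then use the Molien-type averaging formula together with the identity $\sum_i \operatorname{tr}(w|_{\Lambda^i V})s^i = \det(1+sw)$ to obtain the geometric series in $t$. Your treatment is in fact slightly more self-contained, as you justify the convergence and the $W$-stability of the bigrading explicitly rather than deferring to external references.
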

\begin{proof} For simplicity set $X_{i,j}=\T[\widetilde{H}_*(T)]_{i,j}$ 
so that $X= \bigoplus_{i,j} X_{i,j}$.
The action of the Weyl group $W$ preserves both tensor and
cohomological degree, so it preserves each $X_{i,j}$, hence
the direct sum decomposition $\bigoplus_{i,j} X_{i,j}$. 
We want to determine 
$$
P(\Comm(G)_1/G;s,t) = P(\T[\widetilde{H}_*(T)]^W;s,t) = P(X^W;s,t).
$$
As shown in \cite[Appendix p.406]{stafa.comm} we have 
\begin{equation}
P(X^W;s,t)=\frac{1}{|W|} \sum_{w\in W} 
					\left(\sum_{i,j=0}^\infty {\rm Trace}(w|_{X_{i,j}})s^i t^j\right).
\end{equation}
From \cite[Example 3.25, p. 49]{broue.reflextion.gps}, we have 
\begin{equation}
\sum_{i=0}^n {\rm Trace}(w|_{\wedge^i \R^n})s^i = \det(1+sw)
\end{equation}
and
\begin{equation}
\sum_{i,j\geqslant 0} {\rm Trace}(w|_{X_{i,j}})s^i t^j = \frac{1}{1-t(\det(1+sw)-1)}.
\end{equation}
Therefore,
\begin{align*}
P(X^W;s,t)	&=\frac{1}{|W|} \sum_{w\in W} \left(\frac{1}{1-t(\det(1+sw)-1)}\right)\\
			&=\frac{1}{|W|} \sum_{w\in W} \left(\sum_{k\geqslant  0}(\det(1+sw)-1)^k t^k\right).
\end{align*}
\end{proof}

Now using Theorem \ref{thm: Poincare series of Comm(G)/G} we
would like to prove Theorem~\ref{thm: Poincare series of Rep(Z^n,G) INTRO }. 
We begin by proving the following proposition.

\begin{prop}\label{prop: homology of hat T^n/W}
The Poincar\'e series of $\widehat{T}^k/W$ is given by
\begin{equation}
P(\widehat{T}^k/W;s) =
\frac{1}{|W|} \sum_{w\in W} (\det(1+sw)-1)^k.
\end{equation}
\end{prop}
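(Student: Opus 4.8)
The plan is to extract the Poincaré series of $\widehat{T}^k/W$ from the already-established Hilbert–Poincaré series of $\Comm(G)_1/G$ by isolating the tensor-degree-$k$ summand. Recall from Theorem~\ref{thm: Comm(G)/G = J(T)/W } that after one suspension $\Comm(G)_1/G$ splits as $\bigvee_{n\geqslant 1}\widehat{T}^n/W$, so
$$
P(\Comm(G)_1/G;s) = \sum_{k\geqslant 1} P(\widehat{T}^k/W;s).
$$
The bigraded refinement in Theorem~\ref{thm: Poincare series of Comm(G)/G} keeps track of the tensor degree via the variable $t$, and the wedge summand $\widehat{T}^k/W$ corresponds precisely to tensor degree $k$ (the $j$-fold tensors). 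So the strategy is: extract the coefficient of $t^k$ in $P(\Comm(G)_1/G;s,t)$, i.e.\ set $P(\widehat{T}^k/W;s) = [t^k]\,P(\Comm(G)_1/G;s,t)$, and then specialize to recover the ordinary Poincaré series.

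The key computation is then immediate from Theorem~\ref{thm: Poincare series of Comm(G)/G}. First I would write
$$
P(\Comm(G)_1/G;s,t)=\frac{1}{|W|}\sum_{w\in W}\sum_{k\geqslant 0}(\det(1+sw)-1)^k t^k,
$$
and read off that the coefficient of $t^k$ is
$$
[t^k]\,P(\Comm(G)_1/G;s,t) = \frac{1}{|W|}\sum_{w\in W}(\det(1+sw)-1)^k.
$$
To justify the identification of this coefficient with $P(\widehat{T}^k/W;s)$, I would invoke Proposition~\ref{prop: bigrading for tensor algebra} together with the Grothendieck averaging result: $H^*(\widehat{T}^k/W;\F)\cong H^*(\widehat{T}^k;\F)^W$, where $\widehat{T}^k$ is the $k$-fold smash product, whose reduced cohomology is the tensor-degree-$k$ piece $\T[\widetilde H_*(T)]_{*,k}$ of the tensor algebra. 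Thus the Poincaré series of $\widehat{T}^k/W$ is exactly $\sum_i \rk_\Q(\T[\widetilde H_*(T)]^W_{i,k})s^i$, which is the $t^k$-coefficient computed above. The case $k=0$ is degenerate ($\widehat{T}^0$ a point) but consistent, and for $k\geqslant 1$ we get the stated formula.

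I do not expect a genuine obstacle here: the proposition is essentially a bookkeeping consequence of the bigraded Theorem~\ref{thm: Poincare series of Comm(G)/G}. The only point requiring care is making the dictionary between ``tensor degree $k$ in $\T[\widetilde H_*(T)]$'' and ``the smash-product summand $\widehat{T}^k$'' precise — that is, checking that the $W$-action respects this matching of gradings, which was already noted in the discussion preceding Theorem~\ref{thm: Poincare series of Comm(G)/G} (``the action of the Weyl group $W$ preserves this bigrading''). Alternatively, one can prove the proposition directly without reference to $\Comm(G)_1/G$: apply Grothendieck's theorem to $X=\widehat{T}^k$, note $\widetilde H^*(\widehat{T}^k;\Q)\cong \widetilde H^*(T;\Q)^{\otimes k}$ as $W$-representations, and compute $\rk_\Q\big((\widetilde H^*(T;\Q)^{\otimes k})^W\big)$ by the standard averaging formula $\frac{1}{|W|}\sum_{w\in W}\mathrm{Trace}(w\mid (\wedge^* \R^r)^{\otimes k})$, using $\sum_i \mathrm{Trace}(w\mid \wedge^i\R^r)s^i = \det(1+sw)$ and the fact that the reduced part contributes the $(\det(1+sw)-1)$ factor. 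Either route is short; I would present the first since the hard work is already done in Theorem~\ref{thm: Poincare series of Comm(G)/G}.
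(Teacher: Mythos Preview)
Your proposal is correct and follows essentially the same route as the paper: rearrange the Hilbert--Poincar\'e series of Theorem~\ref{thm: Poincare series of Comm(G)/G} to isolate the coefficient of $t^k$, then identify that coefficient with $P(\widehat{T}^k/W;s)$ via the stable splitting of Theorem~\ref{thm: Comm(G)/G = J(T)/W } and the fact that $\widehat{T}^k$ contributes exactly the tensor-degree-$k$ part (with $W$ preserving the bigrading). The paper's proof is slightly terser, but the argument is the same; your alternative direct computation via Grothendieck averaging on $\widehat{T}^k$ alone is also valid and would make the proposition independent of Theorem~\ref{thm: Poincare series of Comm(G)/G}.
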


\begin{proof}
We first rearrange the terms in the Hilbert--Poincar\'e series
and write
\begin{align*}
\ds P(\Comm(G)_1/G;s,t)
	 &= \frac{1}{|W|} \sum_{w\in W} 
					\left(\sum_{k\geqslant  0} (\det(1+sw)-1)^k t^k\right)\\
	 &= \sum_{k=0}^\infty \left( \frac{1}{|W|} \sum_{w\in W} (\det(1+sw)-1)^k \right) t^k.
\end{align*}
The bigrading in the (co)homology of $\Comm(G)_1/G$ comes 
from the homeomorphism $\Comm(G)_1/G \approx J(T)/W.$
There is an induced homotopy equivalence
$$\Sigma ( \Comm(G)_1/G) \simeq \Sigma J(T)/W 
		\simeq \bigvee_{k\geqslant 1}  \widehat{T}^k/W$$
in Theorem \ref{thm: Comm(G)/G = J(T)/W }, and the proposition
follows since the cohomology of $\widehat{T}^k$ is concentrated 
in tensor degree $k$, and $W$ preserves tensor degree.
\end{proof}

Now we can prove the main theorem, which is only a corollary
at this point.

\begin{thm}\label{thm: Poincare series of Rep(Z^n,G)}
Let $G$ be a compact and connected Lie group with maximal 
torus $T$ and Weyl group $W$. Then the Hilbert-Poincar\'e series of
$\Rep(\Z^n,G)_1$ is given by
\begin{align*}
    P(\Rep(\Z^n,G)_1;s,t)&=\frac{1}{|W|} \sum_{w\in W} 
					\left(\sum_{k=0}^n {{n\choose k}}(\det(1+sw)-1)^k t^k\right),
\end{align*}
and the ordinary Poincar\'e series is given by
\begin{align*}
    P(\Rep(\Z^n,G)_1;s)&=\frac{1}{|W|} \sum_{w\in W} \det(1+sw)^n.
\end{align*}
\end{thm}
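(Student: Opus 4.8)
The plan is to derive the formula for $\Rep(\Z^n,G)_1$ directly from Theorem~\ref{thm: decomposition rep} together with Proposition~\ref{prop: homology of hat T^n/W}. By Theorem~\ref{thm: decomposition rep} there is a homotopy equivalence
$$\Sigma(\Rep(\Z^n,G)_1) \simeq \Sigma \bigvee_{1 \leqslant k \leqslant n} \bigvee_{n \choose k} \widehat{T}^k/W,$$
so the reduced homology of $\Rep(\Z^n,G)_1$ splits as a direct sum of ${n \choose k}$ copies of the reduced homology of $\widehat{T}^k/W$ for $1 \leqslant k \leqslant n$ (together with the contribution of the basepoint in degree $0$, which accounts for the $k=0$ term). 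First I would record that the cohomology of $\widehat{T}^k/W$ is concentrated in tensor degree $k$, so assigning it weight $t^k$ is consistent with the bigrading on $\Comm(G)_1/G \approx J(T)/W$; this gives the bigraded statement
$$P(\Rep(\Z^n,G)_1;s,t) = \sum_{k=0}^n {n \choose k} P(\widehat{T}^k/W;s)\, t^k = \frac{1}{|W|}\sum_{w\in W}\left(\sum_{k=0}^n {n\choose k}(\det(1+sw)-1)^k t^k\right),$$
where I have substituted the formula of Proposition~\ref{prop: homology of hat T^n/W}.

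Next I would specialize to the ordinary Poincar\'e series by setting $t=1$, since the tensor degree does not contribute to the cohomological degree. This reduces the inner sum to a binomial expansion:
$$\sum_{k=0}^n {n \choose k}(\det(1+sw)-1)^k = \big((\det(1+sw)-1)+1\big)^n = \det(1+sw)^n.$$
Summing over $w \in W$ and dividing by $|W|$ yields
$$P(\Rep(\Z^n,G)_1;s) = \frac{1}{|W|}\sum_{w\in W}\det(1+sw)^n,$$
which is the desired formula. (Alternatively, one can bypass the bigraded version and directly invoke the homeomorphism $\Rep(\Z^n,G)_1 \approx T^n/W$ from Theorem~\ref{thm: decomposition rep} with Grothendieck's theorem, computing $P(T^n/W;s) = \frac{1}{|W|}\sum_w \mathrm{Trace}(w|_{H^*(T^n;\Q)})s^{\deg}$ and using that $H^*(T^n) \cong (H^*(T))^{\otimes n}$ as a $W$-module, so the trace multiplies to $\det(1+sw)^n$; the stable decomposition route is cleaner since it reuses Proposition~\ref{prop: homology of hat T^n/W} verbatim.)

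The only real subtlety, rather than a genuine obstacle, is bookkeeping at $k=0$: the wedge decomposition of Theorem~\ref{thm: decomposition rep} runs over $1 \leqslant k \leqslant n$, so one must check that the $k=0$ term $\binom{n}{0}(\det(1+sw)-1)^0 = 1$ in the formula correctly accounts for the single class in $H^0(\Rep(\Z^n,G)_1;\Q)$ coming from the fact that $\Rep(\Z^n,G)_1$ is connected (equivalently, $P(\widehat{T}^0/W;s)=1$ matches the reduced-vs-unreduced bookkeeping). This is routine. I expect no serious difficulty here since all the heavy lifting — the homeomorphisms, the stable splittings, Grothendieck's averaging, and the trace computation of Broué — has already been done in the cited results and in Theorem~\ref{thm: Poincare series of Comm(G)/G} and Proposition~\ref{prop: homology of hat T^n/W}; the present theorem is essentially a finite truncation of the $\Comm(G)_1/G$ computation followed by a binomial identity.
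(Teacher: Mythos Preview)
Your proposal is correct and follows essentially the same route as the paper: invoke the stable splitting of Theorem~\ref{thm: decomposition rep}, plug in the formula for $P(\widehat{T}^k/W;s)$ from Proposition~\ref{prop: homology of hat T^n/W} to obtain the bigraded series, and then set $t=1$ and collapse the inner sum via the binomial identity $\sum_{k=0}^n \binom{n}{k}x^k=(x+1)^n$ with $x=\det(1+sw)-1$. Your extra remarks on the $k=0$ bookkeeping and the alternative direct computation via $T^n/W$ and Grothendieck's averaging are fine elaborations but not needed beyond what the paper does.
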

\begin{proof}
From Theorem \ref{thm: decomposition rep} and
Propositon \ref{prop: homology of hat T^n/W} we have the first
part of the theorem
\begin{align*}
    P(\Rep(\Z^n,G)_1;s,t)&=\frac{1}{|W|} \sum_{w\in W} 
		\left(\sum_{k=0}^n {{n\choose k}}(\det(1+sw)-1)^k t^k\right).
\end{align*} 
For the ordinary Poincar\'e series we set $t=1$. 
The second part then follows from
$$
\sum_{k=0}^n {{n\choose k}}x^k=(x+1)^n
$$
by letting $x=\det(1+sw)-1.$
\end{proof}

\begin{rmk}
In the examples in Section \ref{sec: examples Poincare}, one can easily 
verify that for the specific Lie groups there the Euler characteristic of
$\Rep(\Z^n,G)_1$ is 0.
\end{rmk}

Now we turn to finitely generated nilpotent groups.
The following was shown by Bergeron and Silberman 
\cite[Proposition 3.1]{bergeron2016note}.

\begin{prop}
Let $G$ be a compact connected Lie group. If $Q$ is
a finitely generated nilpotent group, then, 
for all $i\geqslant  2$, the inclusion
$$\Hom(Q/\Gamma^i(Q),G) \overset{\iota}{\maps} \Hom(Q,G)$$
is a homeomorphism onto the union of those components 
of the target intersecting the image of $\iota$.
\end{prop}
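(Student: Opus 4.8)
The plan is to reduce the statement to the nilpotent case $Q = F_n/\Gamma^c(F_n)$ a finitely generated free nilpotent group, and then to argue directly about the descending central series stabilizing. First I would recall that if $Q$ is finitely generated nilpotent of class $c$, then $\Gamma^{c+1}(Q) = 1$, so for all $i \geqslant c+1$ the quotient map $Q \to Q/\Gamma^i(Q)$ is an isomorphism and there is nothing to prove. The content is therefore in the range $2 \leqslant i \leqslant c$, and the map $\iota$ is the obvious surjection-induced inclusion $\Hom(Q/\Gamma^i(Q),G) \hookrightarrow \Hom(Q,G)$, whose image consists exactly of those homomorphisms $f \colon Q \to G$ with $f(\Gamma^i(Q)) = 1$. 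This is a closed condition, so $\iota$ is a closed embedding onto its image; the real issue is showing the image is a \emph{union of connected components} of $\Hom(Q,G)$, equivalently that the image is open, equivalently that nearby homomorphisms to the image also kill $\Gamma^i(Q)$.

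The key step is the following openness claim: if $f \colon Q \to G$ kills $\Gamma^i(Q)$, then every homomorphism $g$ in a neighborhood of $f$ in $\Hom(Q,G)$ also kills $\Gamma^i(Q)$. Here one uses that $G$ is a compact connected Lie group, so $G$ has \emph{no small subgroups}: there is a neighborhood $U$ of $1_G$ containing no nontrivial subgroup. Fix a finite generating set $x_1,\dots,x_n$ of $Q$; the group $\Gamma^i(Q)$ is generated (as a normal subgroup, and in the nilpotent case in fact finitely generated) by iterated commutators of the $x_j$ of weight $\geqslant i$. For $g$ close to $f$ in the (subspace of the) product topology on $G^n$, each of these finitely many commutator words, evaluated at $g(x_1),\dots,g(x_n)$, is close to its value at $f(x_1),\dots,f(x_n)$, which is $1_G$. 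So the image $g(\Gamma^i(Q))$ — which is a subgroup of $G$ — is contained in $U$ once $g$ is close enough, and hence is trivial by the no-small-subgroups property. Therefore $g$ factors through $Q/\Gamma^i(Q)$, i.e. $g$ lies in the image of $\iota$. This shows the image is open, and being also closed, it is a union of connected components.

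The main obstacle, and the point that requires care, is the normal-generation step: $\Gamma^i(Q)$ is generated as a \emph{subgroup} not merely by the weight-$i$ iterated commutators of the generators but by all their conjugates, so a priori one must control $g(w) = g(q) g(\gamma) g(q)^{-1}$ for $\gamma$ an iterated commutator and $q \in Q$ arbitrary. This is handled using nilpotence: in a nilpotent group $\Gamma^i(Q)/\Gamma^{i+1}(Q)$ is central in $Q/\Gamma^{i+1}(Q)$, so conjugation acts trivially modulo the next term, and an induction down the (finite) central series shows $\Gamma^i(Q)$ is in fact finitely generated as an abelian-by-... group by the finitely many basic commutators of weight between $i$ and $c$; alternatively one invokes the standard fact that a finitely generated nilpotent group has every subgroup finitely generated. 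Either way one reduces to controlling finitely many explicit words in $g(x_1),\dots,g(x_n)$, after which the no-small-subgroups argument closes the proof. Finally, once $\iota$ is identified with the inclusion of a union of components, it is automatically a homeomorphism onto its image by invariance of the subspace topology, giving the stated conclusion.
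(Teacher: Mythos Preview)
Your argument contains a genuine gap at the no-small-subgroups step. You correctly arrange that finitely many generators $\gamma_1,\dots,\gamma_m$ of $\Gamma^i(Q)$ satisfy $g(\gamma_k)\in U$ for $g$ near $f$, but you then assert that the \emph{subgroup} $g(\Gamma^i(Q))=\langle g(\gamma_1),\dots,g(\gamma_m)\rangle$ lies in $U$. This inference is false in general: a single element arbitrarily close to $1_G$ can generate a subgroup whose closure is an entire circle. As written, your argument uses nothing about $\Gamma^i(Q)$ beyond ``finitely generated subgroup of $Q$,'' and would equally ``prove'' that if $f\colon\Z\to\U(1)$ kills $2\Z$ then so does every nearby homomorphism --- which is plainly wrong. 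The nilpotence of $Q$ must enter more substantively than merely to guarantee finite generation of $\Gamma^i(Q)$.

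What is actually needed is a lemma that exploits the commutator structure: in a compact Lie group there is a neighborhood $U$ of $1$ such that whenever $z\in U$ can be written as $z=[a,b]$ with $z$ commuting with both $a$ and $b$, one has $z=1$. (Embed $G\hookrightarrow\U(N)$; the eigenspaces of $z$ are $a,b$-invariant since $z$ is central in $\langle a,b\rangle$, and on each eigenspace $z$ restricts to a scalar which is a commutator, hence has determinant $1$; for $z$ close enough to $1$ this forces every eigenvalue to be $1$.) With this in hand one runs a downward induction on $j$ from the nilpotency class $c+1$ down to $i$: assuming $g(\Gamma^{j+1}(Q))=1$, the image $g(\Gamma^j(Q))$ is central in $g(Q)$, so each basic $j$-fold commutator $\gamma$ has $g(\gamma)=[g(q_1),g(\gamma')]$ with $g(\gamma)$ commuting with both factors and close to $1$, hence $g(\gamma)=1$. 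The paper itself does not supply a proof of this proposition; it is quoted directly from Bergeron--Silberman, whose argument is along these lines.
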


In particular, since $F_n/\Gamma^2 = \Z^n$ the inclusion 
$$\Hom(\Z^n,G)_1 \overset{\iota}{\maps} \Hom(F_n/\Gamma^i,G)_1$$
is a homeomorphism, and we obtain
$$\Hom(\Z^n,G)_1/G \simeq \Hom(F_n/\Gamma^i,G)_1/G.$$

\begin{cor}
Let $\Gamma$ be a finitely generated nilpotent group, 
with $\rk H_1(\Gamma)=N$, and $G$ a
reductive algebraic group. Then the Hilbert--Poincar\'e
series of $\Rep(\Gamma,G)_1$ is given by
\begin{align*}
    P(\Rep(\Gamma,G)_1;s,t)&=\frac{1}{|W|} \sum_{w\in W} \det(1+sw)^N.
\end{align*}
In particular, this is true for
\begin{itemize}
\item the free nilpotent group $F_n/\Gamma^q$ with rank $n$,
\item the surface groups $\Sg_g$ modulo $\Gamma^q(\Sg_g)$, with rank $2g$,
\item the braid groups $B_k$ modulo $\Gamma^q(B_k)$, with rank 1.
\end{itemize}
\end{cor}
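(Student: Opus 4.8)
The plan is to reduce the corollary, in two steps, to the free abelian case already settled in Theorem~\ref{thm: Poincare series of Rep(Z^n,G)}, the guiding principle being that the component of the trivial representation sees only the free rank of the abelianization $H_1(\Gamma)$. First I would dispose of the hypothesis that $G$ be merely reductive rather than compact: taking $C\leqslant G$ a maximal compact subgroup (which is connected, as $G$ is connected reductive), Bergeron's Theorem~II recalled in the introduction gives a homotopy equivalence $\Hom(\Gamma,G)/\!\!/G\simeq\Hom(\Gamma,C)/C$ for finitely generated nilpotent $\Gamma$, compatible with the trivial representation, so $\Rep(\Gamma,G)_1$ and $\Rep(\Gamma,C)_1$ have the same Poincar\'e series; from now on we may assume $G$ is compact and connected with maximal torus $T$ and Weyl group $W$.

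Next I would invoke the Bergeron--Silberman Proposition stated above, but with $i=2$. Since $\Gamma^2(\Gamma)=[\Gamma,\Gamma]$, it asserts that the inclusion $\Hom(H_1(\Gamma),G)=\Hom(\Gamma/\Gamma^2(\Gamma),G)\hookrightarrow\Hom(\Gamma,G)$ is a homeomorphism onto the union of those path components of $\Hom(\Gamma,G)$ that it meets. The trivial representation of $\Gamma$ factors through $H_1(\Gamma)$, hence lies in the image; restricting to the (conjugation-invariant) component of the trivial representation and passing to $G$-orbits yields a homeomorphism $\Rep(\Gamma,G)_1\approx\Rep(H_1(\Gamma),G)_1$. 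It therefore remains to compute the Poincar\'e series of $\Rep(A',G)_1$ for $A'=H_1(\Gamma)$ a finitely generated abelian group, say $A'\cong\Z^N\oplus A$ with $A$ finite.

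For this last reduction the key observation is that the finite summand $A$ is invisible on the component of the trivial representation. A homomorphism $A'\to G$ amounts to a pair $(\rho,\tau)$ with $\rho\in\Hom(\Z^N,G)$, $\tau\in\Hom(A,G)$, and commuting images; moreover, since $G$ is compact and connected and $\exp$ is a local diffeomorphism at $0\in\mathfrak{g}$, any element of finite order sufficiently near $1_G$ equals $1_G$, so the trivial homomorphism is isolated in $\Hom(A,G)$. Hence $\{\tau=\text{trivial}\}$ is open and closed in $\Hom(A',G)$ and equals $\Hom(\Z^N,G)\times\{1\}$; as the trivial representation of $A'$ lies there, $\Hom(A',G)_1\approx\Hom(\Z^N,G)_1$, and so $\Rep(A',G)_1\approx\Rep(\Z^N,G)_1$. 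Chaining the homeomorphisms, $\Rep(\Gamma,G)_1\approx\Rep(\Z^N,G)_1\approx T^N/W$ by Theorem~\ref{thm: decomposition rep}, and Theorem~\ref{thm: Poincare series of Rep(Z^n,G)} with $n=N$ then gives the asserted (Hilbert--)Poincar\'e series.

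The three examples are then immediate once one computes the abelianization. For the free nilpotent group $F_n/\Gamma^q$ we have $H_1(F_n/\Gamma^q)=H_1(F_n)=\Z^n$ because $\Gamma^q\subseteq[F_n,F_n]$ for $q\geqslant2$, so $N=n$; for $\Sg_g/\Gamma^q(\Sg_g)$ the abelianization is $H_1(\Sg_g)=\Z^{2g}$, so $N=2g$; and for $B_k/\Gamma^q(B_k)$ it is $H_1(B_k)=\Z$ (the abelianization of the braid group, detected by the exponent sum of a braid), so $N=1$ --- note that although $B_k$ itself is not nilpotent, each quotient $B_k/\Gamma^q(B_k)$ is finitely generated nilpotent. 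The only step carrying any real content is the middle one, namely that the torsion of $H_1(\Gamma)$ does not contribute to the trivial component; this rests entirely on the elementary fact that a compact connected Lie group has no nontrivial element of finite order arbitrarily close to the identity, so I do not expect it to be a serious obstacle, and everything else is bookkeeping layered on top of the quoted results.
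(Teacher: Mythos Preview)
Your proposal is correct and follows the route implicit in the paper: Bergeron's theorem to pass from reductive $G$ to a maximal compact subgroup, then the Bergeron--Silberman proposition with $i=2$ to reduce $\Rep(\Gamma,G)_1$ to $\Rep(H_1(\Gamma),G)_1$, and finally Theorem~\ref{thm: Poincare series of Rep(Z^n,G)}. The paper states the corollary without proof beyond the displayed specialization to $F_n/\Gamma^q$; your added step---that torsion in $H_1(\Gamma)$ is invisible on the trivial component because the trivial homomorphism is isolated in $\Hom(A,G)$ for finite $A$---is exactly the detail needed to justify the general statement (though all three listed examples have torsion-free abelianization and never exercise it).
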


\begin{rmk}\label{rmk: rep of surface groups}
If we consider the descending central series of $\Sg_g$, then there are 
\textit{inclusion} maps
$$
\Rep(\Z^{2g},G)_1 \to \Rep(\Sg_g/\Gamma^3,G)_1 \to \Rep(\Sg_g/\Gamma^3,G)_1
		\to \cdots \to \Rep(\Sg_g,G)
$$
each of them inducing isomorphisms in cohomology with field coefficients with
characteristic 0 or relatively prime to $|W|.$ 
It is known that $\Rep(\Sg_g,G)=\Rep(\Sg_g,G)_1$ is path connected. 
It would be interesting to compare the answer to the cohomology of 
$\Rep(\Sg_g,G)$ with the same coefficients, given in a result of 
Cappell, Lee and Miller~\cite[Theorem 2.2]{CappellLeeMiller}.
\end{rmk}

Finally we can determine also the Hilbert-Poincar\'e series of $\X(q,G)_1/G.$

\begin{thm}
Let $G$ be a compact and connected Lie group with maximal torus $T$
and Weyl group $W$. For all $q\geqslant 2$ there is a homotopy equivalence
$$\Sigma \, \X(q,G)_1/G \simeq  \Sigma \, \X(q+1,G)_1/G.$$
In particular, for all $q\geqslant 2$, the Hilbert-Poincar\'e 
series of $\X(q,G)_1/G$ is given by
\begin{align*}
P(\X(q,G)_1/G;s,t)=\frac{1}{|W|} \sum_{w\in W} 
					\left(\sum_{k\geqslant  0} (\det(1+sw)-1)^k t^k\right).
\end{align*}
\end{thm}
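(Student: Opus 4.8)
The plan is to establish the stable homotopy equivalence $\Sigma\,\X(q,G)_1/G \simeq \Sigma\,\X(q+1,G)_1/G$ first, and then to deduce the Hilbert--Poincar\'e series formula by identifying the right-hand side with $\Sigma\,\Comm(G)_1/G = \Sigma\,\X(2,G)_1/G$ and invoking Theorem~\ref{thm: Poincare series of Comm(G)/G}. For the stable equivalence, I would use the $G$-equivariant stable decomposition of Theorem~\ref{thm: decomp X(q,G)/G}, which gives
\[
\Sigma\,\X(q,G)_1/G \simeq \Sigma \bigvee_{n\geqslant 1}\widehat{\Hom}(F_n/\Gamma^q,G)_1/G,
\]
so it suffices to produce, for each $n$, a homotopy equivalence $\widehat{\Hom}(F_n/\Gamma^q,G)_1/G \simeq \widehat{\Hom}(F_n/\Gamma^{q+1},G)_1/G$ compatible with the wedge. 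By the Bergeron--Silberman result quoted just above (Proposition, attributed to \cite{bergeron2016note}), for the finitely generated nilpotent group $Q = F_n/\Gamma^{q+1}$ one has $\Hom(Q/\Gamma^i(Q),G)\hookrightarrow\Hom(Q,G)$ a homeomorphism onto a union of components; taking $i=q$ and noting $\Gamma^q(F_n/\Gamma^{q+1}) = \Gamma^q/\Gamma^{q+1}$ so that $Q/\Gamma^q(Q) = F_n/\Gamma^q$, this gives a homeomorphism $\Hom(F_n/\Gamma^q,G)_1 \approx \Hom(F_n/\Gamma^{q+1},G)_1$ onto the component of the trivial representation. This homeomorphism is $G$-equivariant for the conjugation action and compatible with the subspace $F_n^1$ of tuples having a coordinate equal to $1_G$, so it descends to a homeomorphism of the quotients $\widehat{\Hom}(F_n/\Gamma^q,G)_1/G$.

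Having the stable equivalence, the Hilbert--Poincar\'e series follows formally: iterating gives $\Sigma\,\X(q,G)_1/G \simeq \Sigma\,\X(2,G)_1/G = \Sigma\,\Comm(G)_1/G$, and since a stable homotopy equivalence induces an isomorphism on reduced (co)homology preserving the cohomological grading, we get $P(\X(q,G)_1/G;s) = P(\Comm(G)_1/G;s)$. For the bigraded statement I would note that the tensor grading on $\X(q,G)_1/G$ is the wedge-summand grading coming from Theorem~\ref{thm: decomp X(q,G)/G} (the $k$-th wedge summand $\widehat{\Hom}(F_k/\Gamma^q,G)_1/G$ sits in tensor degree $k$, exactly as for $\Comm(G)_1/G \approx J(T)/W$), and the homeomorphisms above respect this indexing by $k$; hence the bigraded series agree and equal the expression in Theorem~\ref{thm: Poincare series of Comm(G)/G}, namely
\[
P(\X(q,G)_1/G;s,t)=\frac{1}{|W|}\sum_{w\in W}\left(\sum_{k\geqslant 0}(\det(1+sw)-1)^k t^k\right).
\]

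The main obstacle I anticipate is verifying that the Bergeron--Silberman homeomorphism $\Hom(F_n/\Gamma^q,G)_1 \approx \Hom(F_n/\Gamma^{q+1},G)_1$ is genuinely $G$-equivariant and behaves well under the simplicial/filtration structure, so that it passes to the smash quotients $\widehat{\Hom}(\cdot)_1/G$ and assembles into a map of the infinite wedges realizing the stable splitting of $\X(q,G)_1/G$. Equivariance should be immediate since the inclusion $\Hom(F_n/\Gamma^{q+1},G)\hookrightarrow\Hom(F_n/\Gamma^q,G)$ is induced by the surjection of groups $F_n/\Gamma^{q+1}\twoheadrightarrow F_n/\Gamma^q$ and conjugation acts on the target of representations; the compatibility with $F_n^1$ and with the face/degeneracy maps is then a matter of checking that these structure maps are also induced by group homomorphisms and commute with the quotient $F_n/\Gamma^{q+1}\to F_n/\Gamma^q$, which they do. One should also confirm that the component of the trivial representation is the relevant one on both sides — i.e. that the homeomorphism carries the trivial-representation component to the trivial-representation component — which is clear since it is the restriction of an inclusion fixing the trivial tuple. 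Once these compatibilities are in place the argument is purely formal.
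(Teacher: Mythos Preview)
Your proposal is correct and follows essentially the same approach as the paper: both use the Bergeron--Silberman homeomorphism $\Hom(F_n/\Gamma^q,G)_1 \approx \Hom(F_n/\Gamma^{q'},G)_1$, check its $G$-equivariance and compatibility with the fat-wedge filtration to pass to the quotients $\widehat{\Hom}(\cdot)_1/G$, and then invoke the stable splitting of Theorem~\ref{thm: decomp X(q,G)/G} together with the Hilbert--Poincar\'e series of $\Comm(G)_1/G$ from Theorem~\ref{thm: Poincare series of Comm(G)/G}. The only cosmetic difference is that the paper organizes the passage to $\widehat{\Hom}(\cdot)_1/G$ via a commutative ladder of cofibrations (comparing $q=2$ with general $q$ directly), whereas you argue the descent to the smash/$G$-quotient by hand and step from $q$ to $q+1$; since the Bergeron--Silberman map is an honest $G$-equivariant homeomorphism preserving $F_n^1$, your version is in fact slightly more direct.
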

\begin{proof}
Consider the following commutative diagram of cofibrations
\begin{center}
\begin{tikzcd}
S_{n,2}(G) \arrow{d}{i} \arrow{r}{i} & \Hom(\Z^n,G)_{1} \arrow{d}{i} \arrow{r} &\widehat{\Hom}(\Z^n,G)_{1} \arrow{d}\\
S_{n,q}(G)  \arrow{r}{i} & \Hom(F_n/\Gamma^q_n,G)_{1}\arrow{r} &
		\widehat{\Hom}(F_n/\Gamma^q_n,G)_{1},\\
\end{tikzcd}
\end{center}
considered also in \cite{ramrasstafa},
where $S_{n,q}(G)$ is the subspace of $\Hom(F_n/\Gamma^q_n,G)_{1}$
consisting of $n$-tuples with at least one coordinate the identity,
and $q \geqslant  2$. 
The first two vertical maps are inclusions and are homotopy equivalences 
by the gluing lemma and \cite{bergeron2016note}. It follows that
the third vertical map is also a homotopy equivalence. The first vertical map
and the second vertical maps are $G$-equivariant by results
of Bergeron and Silberman~\cite{bergeron2016note}.
Therefore, obtain a commutative diagram
\begin{center}
\begin{tikzcd}
S_{n,2}(G)/G \arrow{d}{i} \arrow{r}{i} & \Hom(\Z^n,G)_{1}/G \arrow{d}{i} \arrow{r} &\widehat{\Hom}(\Z^n,G)_{1}/G = \widehat{T}^n/W \arrow{d}\\
S_{n,q}(G)/G  \arrow{r}{i} & \Hom(F_n/\Gamma^q_n,G)_{1}/G \arrow{r} &
		\widehat{\Hom}(F_n/\Gamma^q_n,G)_{1}/G.\\
\end{tikzcd}
\end{center}
It follows that the right vertical map is also a homotopy equivalence 
$$\widehat{T}^n/W\simeq\widehat{\Hom}(F_n/\Gamma^q_n,G)_{1}/G.$$
The rest follows  from the decomposition in 
Proposition~\ref{prop: homology of hat T^n/W} and
Theorem~\ref{thm: decomp X(q,G)/G}.
\end{proof}

\section{Examples of Poincar\'e series for $\Rep(\Z^n,G)_1$}\label{sec: examples Poincare}

We give examples of Poincar\'e series for $G$ 
one of the Lie groups $\SU(2)$, $\U(2)$, $\U(3)$, $\U(4)$, and $G_2$.
Biswas, Lawton and Ramras~\cite{biswas2014fundamental} show that
the fundamental group of the represetnation space is given by
$$\pi_1 (\Rep(\Z^n, G)_1 \cong \pi_1(G/[G,G])^n.$$
In degree 1, the formulas below recover the rank of this group.
Also $\det(1+sw)$ is invariant under conjugation, so we only need
to know the decomposition of the Weyl group into conjugacy classes.
The bigraded Hilbert--Poincar\'e series are left to the reader as
they can be easily obtained from Theorem \ref{thm: Poincare series of Rep(Z^n,G)}.

\begin{ex}
The group $G=\SU(2)$ has maximal torus of rank 1 and Weyl group 
$\Z_2=\{1,-1\}$ as a subgroup of $GL(\mathfrak{t}^\ast)$.
The representation space $\Rep(\Z^n,\SU(n))$ is path connected.
Therefore, after setting $t=1$ in Theorem \ref{thm: Poincare series of Rep(Z^n,G)}, 
the Poincar\'e series is given by
\begin{align*}
P(\Rep(\Z^n,\SU(2));s)= \frac{1}{2}\bigg((1+s)^n+(1-s)^n \bigg).
\end{align*}
\end{ex}

\begin{ex}
The group $G=\U(2)$ has maximal torus of rank 2 and Weyl group 
$\Z_2 = \left\{1,w = \begin{psmallmatrix}0 & 1\\1 & 0\end{psmallmatrix}
\right\},$
and $\Rep(\Z^n,\U(2))$ is path connected.
Therefore, we get the following  Poincar\'e polynomial
\begin{align*}
P(\Rep(\Z^n,\U(2));s) = \frac{1}{2}\bigg((1+s)^{2n} + (1-s^2)^n \bigg).
\end{align*}
\end{ex}

\begin{ex} For $G=\U(3)$ the rank is 3 and the Weyl group is
the symmetric group on 3 letters
$$W=\Sigma_3=\{e,(12),(13),(23),(123),(132)\}.$$
The matrix representations $W \leqs \GL(\mathfrak{t}^\ast)$ can
be obtained by applying each permutation in $\Sigma_3$ to the
rows of the $3\times 3$ identity matrix. This can be
done in general for the Weyl group $\Sigma_n$ of $\U(n).$
There are three conjugacy classes of elements in $\Sigma_3$, depending
on the unordered partitions of $3$, that is the class of the trivial element,
the class of $(12)\in \Sigma_3$ containing three elements,
and the class of $(123)\in \Sigma_3$ containing two elements.
Therefore, the Poincar\'e series equals
\begin{align*}
P(\Rep&(\Z^n,\U(3));s)=\frac{1}{6}\bigg((1+s)^{3n} + 3 (1-s^2)^n(1+s)^n + 2 (1+s^3)^n \bigg).
\end{align*}
\end{ex}

\begin{ex} $G=\U(4)$ has rank 4 and Weyl group 
the symmetric group on 4 letters.
The matrix representations $W \leqs GL(\mathfrak{t}^\ast)$ can
be obtained by applying each permutation in $\Sigma_4$ to the
rows of the $4\times 4$ identity matrix. 
There are five conjugacy classes of elements in $\Sigma_4$, depending
on the unordered partitions of $4$, namely 
$[1],[(12)],[(123)],[(12)(34)],[(1234)]$, each having 1, 6, 8, 3, 6
elements, and $\det(1+sw)$ equal to 
$(1+s)^4,(1-s^2)(1+s^2),(1+s^3)(1+s),(1-s^2)^2,1-s^4$, respectively.
Therefore, the Poincar\'e series is given by
\begin{align*}
P(\Rep(\Z^n,\U(4));s)=\frac{1}{24} \bigg( &(1+s)^{4n} + 6 (1-s^2)^n(1+s)^{2n} + 8 (1+s^3)^n (1+s)^n\\
	&  + 3 (1-s^2)^{2n} + 6 (1-s^4)^n \bigg).
\end{align*}
\end{ex}

\begin{ex} Now consider the exceptional Lie group $G_2$,
which has rank 2 and Weyl group the dihedral group 
$W=D_{12}$ of order 12, with presentation
$$W=D_{12}=\langle s,t | s^2,t^6,(st)^2 \rangle=
		\{1,t,t^2,t^3,t^4,t^5,s,st,st^2,st^3,st^4,st^5\}.$$
As a subgroup of $GL(\mathfrak{t}^\ast)$ we set
$$s=\left(
\begin{array}{cc}
1 & 0 \\
0 & -1 \\
\end{array}
\right),
\text{ and }
t=\frac{1}{2}\left(
\begin{array}{cc}
1 & \sqrt{3} \\
-\sqrt{3} & 1 \\
\end{array}
\right).$$
$G_2$ has a non-toral elementary abelian 2--subgroup of rank 3, 
so $\Rep(\Z^n,G_2)$ is not path-connected .
Setting $t=1$ and after simplifying, the Poincar\'e series of $\Rep(\Z^n,G_2)_1$ is
\begin{align*}
P(\Rep(\Z^n,G_2)_1;s)= \frac{1}{12} \bigg(&(1+s)^{2n}+6(-s^2+1)^n  + (-1+s)^{2n} \\ 
						&+2(s^2+s+1)^n +2(s^2-s+1)^n  \bigg).
\end{align*}
\end{ex}

\begin{ex} The Poincar\'e series of $\Comm(\U(3))/\U(3)$ can be given by
Theorem~\ref{thm: Poincare series of X(q,G) INTRO} 
$$
P(\Comm(\U(3))/\U(3);s)= \frac{1}{6} \sum_{w\in \Sigma_3} \sum_{k\geqslant  0} (\det(1+sw)-1)^k.
$$
Therefore, using the data in the previous examples we have 
$$
P(\Comm(\U(3))/\U(3);s)= \frac{1}{6}\sum_{k\geqslant  0} 
\bigg((3s+3s^2+s^3)^{k} + 3 (s-s^2-s^3)^k + 2 s^{3k}\bigg).
$$
\end{ex}


\begin{rmk}
The same information can be used to obtain the same Poincar\'e series for 
$\Rep(F_n/\Gamma^q,G)$ and $\Rep(\Sg_g/\Gamma^q,G)$ (and other examples) as shown above.
Moreover, the same information can be used to obtain Hilbert--Poincar\'e series for
$\X(m,G)_1/G$ for all $m\geqslant  2$.

Another observation is that our calculations in this section indicate that
for even $n$, and $G$ as above, the spaces $\Rep(\Z^n,G)_1$ 
are rational Poincar\'e duality spaces, contrary to $\Hom(\Z^n,G)_1$, 
for which this happens when $n$ is odd \cite{ramrasstafa}.
\end{rmk}

\bibliographystyle{amsplain}
 
\bibliography{CharVar}

\end{document}